\documentclass[a4paper,11pt]{article}
\usepackage[utf8]{inputenc}
\usepackage{amsmath,amsfonts,amsthm,amssymb}
\usepackage[margin=1.3in]{geometry}
\usepackage{breqn,fancyref}
\usepackage[hidelinks]{hyperref}
\usepackage{enumerate}
\allowdisplaybreaks

\setlength{\parskip}{1em}

\newcommand{\drugapochodna}[3]{\frac{\partial^2 #1}{\partial #2 \partial #3}}
\newcommand{\poczasie}[2]{\frac{d^{#2} #1}{dt^{#2}}}
\newcommand{\aumlaut}[0]{\"a}
\newcommand{\Kahler}{K\aumlaut hler }
\newcommand{\calkowite}{\mathbb{Z}}
\newcommand{\zespolone}[0]{\mathbb{C}}
\newcommand{\rzeczywiste}[0]{\mathbb{R}}
\newcommand{\torus}{\mathbf{T}}

\newcommand{\email}{\href{mailto:szymon.myga@im.uj.edu.pl}{\nolinkurl{szymon.myga@im.uj.edu.pl}}}
\newcommand{\Monge}{Monge-Amp\`{e}re }
\newcommand{\moment}{\mathbf{m}}

\newtheorem{Thm}{Theorem}[section]
\newtheorem{Prop}[Thm]{Proposition}

\newtheorem{Cor}[Thm]{Corollary}
\newtheorem{Fac}[Thm]{Fact}

\newtheorem*{Lem*}{Lemma}

\theoremstyle{definition}
\newtheorem*{Def}{Definition}
\newtheorem*{Rem}{Remark}

\title{On the conditional measures on the orbits of the complex torus}
\author{Szymon Myga}
\date{}

\begin{document}

\maketitle

\begin{abstract}
 We explore the structure of invariant measures on compact \Kahler manifolds with Hamiltonian torus actions. We derive the formula for conditional measures on the orbits of the complex torus and use it to prove a conditional statement about uniqueness of solutions to the $g$-\Monge equation.
\end{abstract}

\section{Introduction}

Let $(X,\omega)$ be a compact K\aumlaut hler manifold of real dimension $2n$, i.e. $X$ is a complex manifold and one can find a hermitian metric on it whose fundamental form $\omega$ is closed, thus making $(X,\omega)$ into a symplectic manifold. We assume that there is a real torus $\mathbf{T}^k$ acting smoothly and effectively on it by symplectomorphisms. 
%Such an action also generates a Lie algebra homomorphism from the Lie algebra of the torus $\mathfrak{t} \simeq \rzeczywiste^k$ into the Lie algebra of the vector fields of $X$. 
This action can be extended to the holomorphic action of complex torus $\mathbf{T}_\zespolone^k \simeq (\zespolone^*)^k$. %We also assume that the action is effective and that is, the trivial automorphism only comes from the identity element. Lastly, we want the action to be
Finally, we assume that the action is Hamiltonian, so there is a momentum map: an action invariant map $\mathbf{m}:X \rightarrow (\rzeczywiste^k)^*$ such that for every element $t$ of $\mathfrak{t}~\simeq \rzeczywiste^k$ -- the Lie algebra of $\mathbf{T}^k$, we have that
\[
 -d\langle \mathbf{m}(p),t\rangle = \omega_p(t^{\#},\cdot), 
\]
with $t^{\#}$ being the vector field generated by $t$ and $\langle \mathbf{m}(p),t\rangle$ the value of the linear form $\mathbf{m}(p)$ at $t$.

By the classical result of Atiyah~\cite{Atiyah}, Guillemin and Sternberg~\cite{GuilleminSternberg} the image $\moment(X) \subseteq \rzeczywiste^k$ is a compact convex polytope. Let us denote it by $\Delta$ and mention that it is an invariant of the action and cohomology group of $\omega$.

Suppose we take an invariant quasiplurisubharmonic function $\phi$ and look at a measure
\[
 \mu := (\omega + i\partial\bar{\partial}\phi)^n.
\]
We normalize $\omega$, so that $\mu$ is a probability measure. There is an open and dense subset $X^0 \subseteq X$ that is a principal $\mathbf{T}_\zespolone^k$-fibration over some manifold $W^0$. For now we assume that $\phi$ is $C^2$ and $\omega + i\partial\bar{\partial}\phi > 0$, hence $\mu$ has a positive density. We look at the orbits of complexified action, which coincide with fibers of $X^0$. In appropriate coordinates the potential of the form $\omega + i\partial\bar{\partial}\phi$ is convex along the orbits of the complex torus. Take $m := n-k$ and let $(x,y,w)$ be those coordinates, namely $(x,y) \in \rzeczywiste^k \times (\rzeczywiste/\calkowite)^k$ and $w \in D \subseteq \zespolone^{m}$. Let $u$ be the potential that locally satisfies $i\partial\bar{\partial}u = \omega + i\partial\bar{\partial}\phi$ and let $\hat{\mu}(w)$ be the density of $\mu$ averaged along the orbits, thus naturally defined on $W^0$ and positive from our assumption. Then we prove the following 

\begin{Thm}
 In the setting sketched above let $\eta_w(x)\,dx$ be the conditional measure of $\mu$ along the orbit $\pi^{-1}(w)$ of the complex torus in $X^0$. It has a density
 \[
   \eta_w(x) = 
  \displaystyle (-1)^{m}\frac{\sigma^{m}(\moment_u,w) }{\hat{\mu}(w)}\det D_x^2 u(x,w),
 \]
 where $\moment_u$ is a momentum map for the symplectic form $\omega + i\partial\bar{\partial}\phi$ and $\sigma^{m}(p,w) = \det D^2_w u^*$ with $p \in \Delta$, is the Hessian of the (locally defined) Legendre transform of $u$, with respect to $w$ variables.
\end{Thm}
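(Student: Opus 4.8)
\emph{Proof proposal.} The plan is to write $\mu$ explicitly in the adapted coordinates $(x,y,w)$, disintegrate it over $W^0$, and then identify the Schur complement of the relevant Hessian with (minus) the $w$-Hessian of the Legendre transform of $u$.

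\emph{Step 1: local form of $\mu$.} On $X^0$ write the $\mathbf{T}_\zespolone^k$-coordinates as $z_1,\dots,z_k$ with $x_a=\log|z_a|^2$ and $y_a=\arg z_a$ (suitably normalised), together with $w\in D\subseteq\zespolone^{m}$. By $\mathbf{T}^k$-invariance the local potential depends on $z$ only through $x$, so $u=u(x,w)$, convex in $x$ by hypothesis. Since $u$ is a holomorphic-coordinate potential one has $\partial_{z_a}\partial_{\bar z_b}u=u_{x_ax_b}/(z_a\bar z_b)$ and $\partial_{z_a}\partial_{\bar w_\beta}u=u_{x_a\bar w_\beta}/z_a$, so the complex Hessian of $u$ in the coordinates $(z,w)$ is obtained from
\[
M(x,w)=\begin{pmatrix} D_x^2 u & D_xD_{\bar w}u\\ D_wD_x u & D_wD_{\bar w}u\end{pmatrix}
\]
by dividing the $a$-th row by $z_a$ and the $a$-th column by $\bar z_a$ for $1\le a\le k$. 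Taking determinants yields a factor $\prod_{a}|z_a|^{-2}$, which cancels exactly against the factor $\prod_a|z_a|^2$ produced by $i\,dz_a\wedge d\bar z_a=\mathrm{const}\cdot|z_a|^2\,dx_a\wedge dy_a$. Hence, after absorbing an absolute constant into the normalisation, $\mu=\det M(x,w)\,dx\,dy\,d\lambda(w)$ with $d\lambda$ Lebesgue measure on $D$; note $M>0$ since $\omega+i\partial\bar\partial\phi>0$.

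\emph{Step 2: disintegration.} As $\det M$ is independent of $y$ and the $y$-torus has unit volume, the density of $\pi_*\mu$ with respect to $d\lambda(w)$ is $\hat\mu(w)=\int_{\rzeczywiste^k}\det M(x,w)\,dx$, and the conditional measure of $\mu$ along $\pi^{-1}(w)$ is $\eta_w(x)\,dx$ (after integrating out $y$) with $\eta_w(x)=\det M(x,w)/\hat\mu(w)$. It remains to identify $\det M$. Write $M=\begin{pmatrix}A&B\\ B^{*}&C\end{pmatrix}$, $A=D_x^2u$, so $\det M=\det A\cdot\det(C-B^{*}A^{-1}B)$. The momentum map for $\omega+i\partial\bar\partial\phi$ restricted to an orbit is $\moment_u=D_xu(\cdot,w)$, so for fixed $w$ the assignment $x\mapsto p=\moment_u(x,w)$ is a diffeomorphism onto its image in $\Delta$, with inverse $x=D_pu^{*}(p,w)$, where $u^{*}(p,w)=\langle x,p\rangle-u(x,w)$ is the partial Legendre transform. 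Differentiating $D_xu(x(p,w),w)=p$ in $\bar w$ gives $\partial_{\bar w}x=-A^{-1}D_xD_{\bar w}u$; combining this with the envelope identity $D_wu^{*}(p,w)=-D_wu(x(p,w),w)$ and one further differentiation yields $D_w^2 u^{*}=-(C-B^{*}A^{-1}B)$. Therefore $\det(C-B^{*}A^{-1}B)=(-1)^{m}\det D_w^2 u^{*}=(-1)^{m}\sigma^{m}(\moment_u,w)$, so $\det M(x,w)=(-1)^{m}\sigma^{m}(\moment_u(x,w),w)\det D_x^2u(x,w)$, and Step 2 gives the claimed formula. (The Schur complement is positive definite, so $\sigma^m>0$ and $\eta_w>0$, as required.)

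\emph{Main obstacle.} The delicate points are, first, pinning down the normal form of $X^0$ precisely enough that the cancellation of the $|z_a|^2$-factors is exact and that $\moment_u=D_xu$ holds with the right normalisation (so that $p=\moment_u(x,w)$ lands in $\Delta$); and second, the bookkeeping in the parametrised Legendre transform — in particular that $D_wu^{*}$ is the partial derivative at fixed $p$, not at fixed $x$, and that $D_w^2$ here is the complex (Levi) Hessian in the $w$-variables that remain after transforming in $x$. One should also check that $D_x^2u$, $\det M$, $u^{*}$ and $\sigma^{m}$ descend to well-defined objects on $X^0$ (resp.\ on $\Delta\times W^0$), which holds because a change of $\mathbf{T}_\zespolone^k$-frame acts by translation in $x$. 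Everything else is the standard toric dictionary together with the Schur-complement identity.
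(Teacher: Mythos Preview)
Your argument is correct and follows the same overall arc as the paper: write $\mu$ locally as $\det M(x,w)\,dx\,dy\,d\lambda(w)$, disintegrate, and identify the matrix $C-B^{*}A^{-1}B$ with $-D_w^2 u^{*}$ via the parametrised Legendre transform. The paper carries out exactly this last identification in its Proposition on the reduced form, obtaining $\phi^{*}_{w_i\bar w_j}=-\phi_{w_i\bar w_j}+\nabla_x\phi_{\bar w_j}\cdot(D_x^2\phi)^{-1}\nabla_x\phi_{w_i}$.

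Where you genuinely diverge is in the passage from this entrywise identity to the determinant factorisation $\det M=(-1)^{m}\det D_w^2 u^{*}\cdot\det D_x^2 u$. You invoke the block Schur complement formula $\det M=\det A\cdot\det(C-B^{*}A^{-1}B)$, which settles the matter in one line. The paper instead proves the same factorisation by a direct Leibniz expansion of $\det(D_w^2 u^{*})$, tracking which monomials in the expansion of $\det M$ survive; this requires a separate combinatorial identity about products of complementary minors (the paper's final Proposition, proved by a rather involved double induction). Your route is substantially shorter and more transparent; the paper's route has the minor virtue of never inverting $A$ and of exhibiting an explicit minor identity that may be of independent interest, but for the purpose of the theorem itself the Schur complement argument is clearly preferable.
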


\begin{Rem}
 In the case when $k = n$, and hence $X$ is a toric variety, the result is trivial as there is only one open orbit, and the conditional measure is simply $\mu$ restricted to that orbit. That determines $\mu$ completely as it does not put mass on pluripolar sets, so in particular it does not put mass on the singular orbits. For this reason we assume from now on that $k < n$.
\end{Rem}

The Legendre transform $u^*$ in the theorem above is defined locally, but its corresponding $(1,1)$-form in $w$ variables is defined globally and in fact for fixed $p$ is the reduced symplectic form coming from Mardsen-Weinstein reduction. If in turn we fix $w$ and vary $p$, the top form $\sigma^m(p,w)$ becomes the Duistermaat-Heckman measure of the particular $\mathbf{T}_\zespolone^k$-orbit, namely it is a measure on $\Delta$ that comes from transporting $\eta_w(x)\,dx$ through $\moment_u$.

From that perspective we might consider a more general equation for the potentials $\phi$. Instead of $\phi$ solving the \Monge equation, we assume it solves the $g$-\Monge or transport \Monge equation, namely we take a positive continuous function $g$ defined on $\Delta$ and assume that $\phi$ solves
\[\label{eq:monge}
 MA_g(\phi) = g(\moment_\phi)(\omega + i\partial\bar{\partial}\phi)^n = \mu. \tag{*}
\]
This operator was first considered by Berman and Witt Nystr\"{o}m in~\cite{BermanWitt} in relation to \Kahler-Ricci flow and optimal transport problem. In this preprint the authors proved the existence of solutions to the problem and their uniqueness for the measures of finite energy. It was done by adapting the pluripotential methods developed for the variational approach to the complex \Monge equation in~\cite{MongeVariational}.

We also notice that some of the assumptions can be weakened though we still want to use the formula for the conditional measures. For this we only need the positivity of the form on $X^0$. Thus we might assume that $\mu$ is of the form $f\omega^n$, with $f$ positive on $X^0$ but possibly singular on small orbits. Of course there also must be a normalization assumption, i.e. $\int_X f\omega^n = \int_X g(\moment)\omega^n$.

If we consider the densities of the conditional measures with respect to transport \Monge operator the theorem above holds mutatis mutandis with the density of $\eta_w(x)$ being $(-1)^{m}\frac{g(\moment_u)\sigma^{m}(\moment_u,w) }{\hat{\mu}(w)}\det D_x^2 u(x,w)$.

With the formula for conditional measures in hand we can say something about the uniqueness of the transport \Monge equation.

\begin{Thm}
 Suppose we have two solutions to the problem $\eqref{eq:monge}$ with $\mu = f\omega^n$ and $f$ as above, then if they both produce the same momentum map, they are equal up to an additive constant.
\end{Thm}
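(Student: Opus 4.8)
The plan is to reduce the problem to uniqueness on a fibered family of toric-type problems and then patch the fiberwise conclusions together. Let $\phi_0$ and $\phi_1$ be two solutions of \eqref{eq:monge} with $\mu = f\omega^n$, and suppose they induce the same momentum map, i.e. $\moment_{\phi_0} = \moment_{\phi_1} =: \moment_u$ as maps $X \to \Delta$. Working over the principal stratum $X^0 \to W^0$, write $u_0, u_1$ for the corresponding local potentials of $\omega + i\partial\bar\partial\phi_j$ in the coordinates $(x,y,w)$. Since $\moment_{\phi_j}$ in these coordinates is given by the $x$-gradient of the (convex in $x$) potential, $\nabla_x u_0(\cdot,w) = \nabla_x u_1(\cdot,w)$ for each fixed $w$, so $u_0(x,w) - u_1(x,w) = c(w)$ is independent of $x$ — equality up to a fiberwise constant along each complex-torus orbit. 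The task is then to promote this to a single global constant on $X$.

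First I would invoke the conditional-measure formula (the transport version stated after Theorem 1.1): for each $w$, the conditional measure of $\mu$ on $\pi^{-1}(w)$ has density $(-1)^m \frac{g(\moment_u)\sigma^m(\moment_u,w)}{\hat\mu(w)}\det D_x^2 u(x,w)$. Because $\mu$ and $f$ are the same for both solutions, and $\det D_x^2 u_0 = \det D_x^2 u_1$ already follows from $\nabla_x u_0 = \nabla_x u_1$, the formula forces the normalizing factors $\sigma^m(\moment_u, w)/\hat\mu(w)$ to agree as well — but more usefully, the Legendre/Marsden–Weinstein data $\sigma^m(p,w) = \det D_w^2 u^*$ must coincide for $u_0^*$ and $u_1^*$. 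Since the Legendre transform in $x$ sends $u_j(x,w)$ to $u_j^*(p,w)$ and the two $u_j$ differ by $c(w)$, one has $u_1^*(p,w) = u_0^*(p,w) - c(w)$; then $\det D_w^2 u_1^* = \det D_w^2 u_0^*$ gives a Monge–Ampère-type equation $\det D_w^2\big(u_0^* - c\big) = \det D_w^2 u_0^*$ on the reduced space, valid for every fixed $p \in \Delta$.

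Next I would extract rigidity from this. The form $i\partial\bar\partial_w u_j^*(p,\cdot)$ is, as noted in the excerpt, the reduced (Marsden–Weinstein) symplectic form at level $p$, hence a genuine positive $(1,1)$-form on the reduced \Kahler manifold $W^0$ (which compactifies to a compact \Kahler manifold $X_p$). So $u_0^*(p,\cdot)$ and $u_0^*(p,\cdot) - c$ are two potentials for cohomologous \Kahler forms with the \emph{same} Monge–Ampère measure $\sigma^m(p,\cdot)$; by uniqueness for the homogeneous complex Monge–Ampère equation / the maximum principle on the compact \Kahler manifold $X_p$ (e.g. Calabi–Yau uniqueness, or simply that $c = u_0^* - u_1^*$ is $\omega_p$-psh with $-c$ also $\omega_p$-psh, forcing $c$ constant), we conclude $c(w)$ is constant on each connected reduced space. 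Varying $p$ continuously over $\Delta$ and using connectedness of $X^0$ (equivalently of $W^0$, since $X^0$ is a principal fibration over it), $c(w) \equiv c$ is a single constant; then $u_0 - u_1 \equiv c$ on $X^0$, hence $\phi_0 - \phi_1 \equiv c$ there, and by density of $X^0$ and continuity of $\phi_j$, on all of $X$.

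I expect the main obstacle to be the last gluing step done cleanly: one must make sure the reduced spaces $X_p$ genuinely form a nice family over the interior of $\Delta$, that $c(w)$ really is the \emph{same} function for all $p$ (it is, since it is intrinsically $u_0 - u_1$ on $X^0$ before any Legendre transform), and that the compactification argument on each $X_p$ is legitimate when $g$ and $f$ are only continuous and $f$ may be singular on small orbits — but the singular orbits are pluripolar and carry no mass, so the reduced Monge–Ampère equation still has bounded psh potentials and the uniqueness principle applies. A secondary point to handle with care is the precise coordinate bookkeeping showing that "same momentum map" translates to $\nabla_x u_0 = \nabla_x u_1$ rather than merely $\nabla_x u_0 = \nabla_x u_1 + \text{const}$; this is exactly where the hypothesis that the two momentum maps are \emph{equal} as $\Delta$-valued functions (not just up to translation) is used.
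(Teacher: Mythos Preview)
Your strategy is essentially the paper's: use the conditional--measure/determinant formula to split the problem into a fiberwise statement ($u_0-u_1$ depends only on $w$) and a transverse statement (the reduced top forms $\sigma^m_{u_0}=\sigma^m_{u_1}$ on $W$), then invoke Monge--Amp\`ere uniqueness on the reduced space to force $c(w)$ to be a genuine constant. The paper packages this into a general Proposition and reaches the fiberwise step via McCann's uniqueness for optimal transport; your direct observation that ``same momentum map'' literally means $\nabla_x u_0=\nabla_x u_1$, hence $u_0-u_1=c(w)$, is a cleaner shortcut to that same conclusion.

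Two small points to tighten. First, the reduced space $W$ is in general a compact \Kahler \emph{orbifold}, not a manifold (the $\mathbf{T}^k$-action on a regular level set is only almost free), so the uniqueness result you need on $W$ is the orbifold Calabi--Yau theorem the paper cites (Faulk), not the manifold version; your ``compactifies to a compact \Kahler manifold $X_p$'' is not quite right. Second, your parenthetical shortcut ``$c$ is $\omega_p$-psh and $-c$ is $\omega_p$-psh'' does not follow: from $\sigma_1=\sigma_0 - i\partial\bar\partial_w c\ge 0$ you only get that $-c$ is $\sigma_0$-psh and $c$ is $\sigma_1$-psh, which are different reference forms, so you really do need the step $\sigma_0^m=\sigma_1^m\Rightarrow\sigma_0=\sigma_1$ via Monge--Amp\`ere uniqueness before concluding $c$ is constant.
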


The equation~\eqref{eq:monge} comes up in the study of \Kahler-Ricci solitons on Fano varieties, which are projective varieties that admit an ample line bundle. A \Kahler-Ricci soliton is a \Kahler form $\omega$ that solves the equation
\[
 \text{Ric}(\omega) - \omega = \mathcal{L}_V\omega,
\]
with $\text{Ric}(\omega)$ being the Ricci form and $\mathcal{L}_V$ the Lie derivative along some holomorphic vector field $V$. By the results of~\cite{BermanWitt} the existence of such a form corresponds to solution of~\eqref{eq:monge} for some function $g_V$.

The upshot of our result is that we do not rely on the variational methods and instead connect the potentials of \Kahler forms with the symplectic geometry of the underlying manifold. That way our result is valid even for the measures of infinite energy. Of course, on the other hand we still have a strong assumption about the positivity on $X^0$.

The structure of the paper is following. In the Section~\ref{sec:torus} we introduce the toric setting in some more detail, in the short Section~\ref{sec:transport} we present the results from the theories of optimal transportation and disintegration od measures that we will use later on. Finally in Section~\ref{sec:wyniki} we present the proofs of the results. Those are rather straightforward as the difficulties lie mainly in making sure that all the tools work together. The last Section is devoted to proving a result about permuting the minors of a matrix.

\section{The torus action and its properties}\label{sec:torus}

In this section we will take a closer look at the symplectic geometry of Hamiltonian torus actions on \Kahler manifolds. The results are classical and can be found in the many excellent textbooks (e.g.~\cite{Audin}, \cite{McDuffSalamon}, \cite{GGK})

We assume that the real torus $\mathbf{T}^k \simeq (S^1)^k$ acts on $(X,\omega)$ as a smooth Lie group in a holomorphic, isometric and effective way. Specifically, there is a map
\[
 \mathbf{T}^k \times X \ni (t,x) \longrightarrow t\cdot x \in X,
\]
smooth in both variables that commutes with the group operations of $\mathbf{T}^k$. We will not make a distinction between the elements of the torus and automorphisms of $X$ generated by those elements. We assume that those automorphisms are in fact symplectomorphisms, thus $t^*\omega = \omega$ for each $t \in \mathbf{T}^k$. The holomorphicity of the action means that the complex structure $J$ of $(X,\omega)$ is preserved, i.e. each $dt$ commutes with $J$. By 'effective' we mean that the action understood as a homomorphism of $\mathbf{T}^k$ into the isometries of $(X, \omega, J)$ is injective.

The action lifts to the mapping from the Lie algebra of the torus $\mathfrak{t}~ (\simeq \rzeczywiste^k)$ to the holomorphic vector fields on $X$. We denote this by $\rzeczywiste^k \ni Y \rightarrow Y^\#$. This vector field can be simply computed at every point from the formula
\[
 Y^\#_x = \left. \poczasie{}{}\right|_{t=0} \exp(tY)\cdot x,
\]
and its called the fundamental vector field associated to $Y$. This map is a monomorphism of algebras where the multiplication of vector fields on $X$ is the usual Lie bracket of vector fields. Since the Lie algebra of a commutative Lie group has a trivial bracket, that implies that the fundamental vector fields of the torus action must commute too. 

We also assume that the action is Hamiltonian, which means that there is a map $\mathbf{m}$ that maps $X$ to the dual algebra $t^*$ in a way that is equivariant and at each $x \in X$ satisfies 
\[
 d\langle \mathbf{m}(x), Y \rangle = -\omega_x(Y^\#,\cdot)
\]
for every $Y \in \rzeczywiste^k$. Since the group is commutative, equivariance here simply means that $\mathbf{m}$ is torus invariant. For fixed $Y$ the function $\langle \mathbf{m}, Y \rangle : X \rightarrow \rzeczywiste$ will be denoted by $\mathbf{m}^Y$ and called the Hamiltonian function of $Y^\#$. Let us finally state one simple property of the momentum map that we will heavily rely on
\begin{Fac}\label{fact}
 The action of $\torus^k$ at $x$ is locally free (i.e. the stabilizer subgroup is finite) if and only if $d\moment_x$ is surjective.
\end{Fac}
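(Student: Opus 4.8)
The plan is to collapse the statement to a pointwise linear-algebra duality at $x$, using that the defining relation of the momentum map expresses $d\moment_x$ entirely through the infinitesimal action and the symplectic form. First I would unwind the momentum condition pointwise: for $v \in T_x X$ and $Y \in \mathfrak{t}$, the defining equation $d\skalarny{\moment(x)}{Y} = -\omega_x(Y^\#,\cdot)$ evaluated at $v$ reads
\[
 \skalarny{d\moment_x(v)}{Y} = -\omega_x(Y^\#_x, v).
\]
Reading this with $Y$ as the free variable identifies the transpose $(d\moment_x)^{*}\colon \mathfrak{t} \to T^{*}_x X$ (under $(\mathfrak{t}^{*})^{*} = \mathfrak{t}$) as the map $Y \mapsto -\iota_{Y^\#_x}\omega_x$. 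Since a linear map between finite-dimensional spaces is surjective exactly when its transpose is injective, surjectivity of $d\moment_x$ is equivalent to injectivity of $Y \mapsto \iota_{Y^\#_x}\omega_x$.

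Next I would peel off the symplectic form by nondegeneracy. Because $\omega_x$ is a nondegenerate bilinear form, the flat isomorphism $T_x X \to T^{*}_x X$, $v \mapsto \iota_v\omega_x$, has trivial kernel, so $\iota_{Y^\#_x}\omega_x = 0$ if and only if $Y^\#_x = 0$. Combining with the previous paragraph, $d\moment_x$ is surjective precisely when the infinitesimal action map $\mathfrak{t} \to T_x X$, $Y \mapsto Y^\#_x$, is injective. The remaining task is to identify the kernel of this map with the Lie algebra of the stabilizer $\torus^k_x = \{t \in \torus^k : t\cdot x = x\}$. This is the standard flow argument: if $Y^\#_x = 0$ then the constant curve at $x$ solves the flow equation of $Y^\#$, so by uniqueness of integral curves $\exp(tY)\cdot x = x$ for all $t$ and hence $Y \in \mathrm{Lie}(\torus^k_x)$; conversely, differentiating $\exp(tY)\cdot x = x$ at $t = 0$ gives $Y^\#_x = 0$. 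Thus $\ker(Y \mapsto Y^\#_x) = \mathrm{Lie}(\torus^k_x)$, and injectivity of the infinitesimal action is the same as $\mathrm{Lie}(\torus^k_x) = 0$.

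The final step is a dimension count, and this is where I expect the only genuine care to be needed. The stabilizer $\torus^k_x$ is closed in $\torus^k$ (stabilizers of continuous actions are closed), hence a compact Lie subgroup, so its dimension as a manifold equals $\dim \mathrm{Lie}(\torus^k_x)$. A compact Lie group is finite if and only if it is zero-dimensional, i.e. if and only if its Lie algebra vanishes. Therefore $\mathrm{Lie}(\torus^k_x) = 0$ is equivalent to $\torus^k_x$ being finite, which is by definition local freeness at $x$. Chaining the equivalences — local freeness $\Leftrightarrow \mathrm{Lie}(\torus^k_x) = 0 \Leftrightarrow$ injectivity of $Y \mapsto Y^\#_x \Leftrightarrow$ surjectivity of $d\moment_x$ — closes the proof. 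The computation is otherwise entirely formal; the substantive input is the Lie-group correspondence between closed subgroups and subalgebras together with compactness of $\torus^k$, which is exactly what converts "trivial Lie algebra of stabilizer" into "finite stabilizer."
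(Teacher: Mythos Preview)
Your proof is correct and is the standard argument. Note, however, that the paper does not actually prove this statement: it is recorded as a \emph{Fact} without proof, presented as a well-known property of momentum maps (the surrounding section cites standard symplectic-geometry textbooks such as \cite{Audin}, \cite{McDuffSalamon}, \cite{GGK}). Your argument---identifying the transpose of $d\moment_x$ with $Y \mapsto -\iota_{Y^\#_x}\omega_x$, using nondegeneracy of $\omega$ to reduce to injectivity of the infinitesimal action, and then invoking the closed-subgroup/Lie-algebra correspondence to equate this with finiteness of the stabilizer---is exactly the proof one finds in those references, so there is nothing to compare against in the paper itself.
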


By the fundamental results in the theory of Lie group actions, since the action is effective there is an open dense subset where the action is free. Orbits in this subset are called the principal orbits, on the other hand, orbits that have a stabilizer group of positive dimension are called singular. Moreover, since the target manifold is compact, there are only finitely many types of singular orbits and the holomorphicity implies that those orbits form an analytic subset of $X$. Indeed, each such orbit must be a null set of some real holomorphic vector field. Let us denote by $X^0$ the principal orbit subset of $X$, i.e. the subset of $X$ where the action is free and by $X^1$ the complement of singular orbits. That way $X^1$, aside from $X^0$ can also contain the exceptional orbits, i.e. orbits with finite stabilizer subgroups. 

\paragraph{Complexification.} Due to classical Bochner-Montgomery~\cite{BochnerMontgomery} result, the holomorphic action of $\mathbf{T}^k$ can be extended to the holomorphic action of its complexification, although not by isometries anymore. Moreover, this extension can be made somewhat explicit. Since the complexification of $\mathbf{T}^k$ is $(\zespolone^*)^k$ we can represent its elements as $(e^{x_1 + iy_1},\ldots,e^{x_k + iy_k})$ with Lie algebra isomorphic to $\rzeczywiste^k \oplus i\rzeczywiste^k$. Now its a matter of computation to see that the action of elements from $\{y_1 = \ldots = y_k = 0\}$ will induce a vector field of the form $JY^\#$ for $Y$ in $\mathfrak{t}$. Let us denote the imaginary part $i\rzeczywiste^k$ by $H$.

The complexified vector fields can be computed from the momentum mapping. The fundamental observation is the following: let $g$ be the Riemannian metric of $\omega$ and $\nabla_g f := g^{ij}\nabla_j f$ be the metric gradient. Then $\nabla_g \mathbf{m}^Y = JY^\#$. Indeed, let $A$ be any tangent vector, then
\[
 g(\nabla_g \mathbf{m}^Y,A) = d \langle \mathbf{m}, Y\rangle (A) = -\omega(Y^\#,A) = g(JY^\#,A).
\]

\paragraph{Reduction.} In general, producing the space of orbits of a group acting on a manifold by taking the quotient $X/G$ will be of little value. One can readily see this by considering the action of $\rzeczywiste$ on the two dimensional real torus $\mathbf{T}^2$. The line acts by translations on $\rzeczywiste^2$ with $t\cdot (x,y) = (x +t, y +\alpha t)$ and $\alpha$ any irrational number. This action projects to a free action on torus with each orbit being a dense subset, hence the orbit space will fail to even be Hausdorff. On the other hand there is the Quotient Manifold Theorem, which says that the orbit space of a free and proper action is a manifold. Unfortunately, there is no hope for this to hold in the setting described above as the action of the torus must have fixed points. It turns out though that in the case of Hamiltonian actions one can get relatively close to the orbit space being a manifold, moreover the resulting space will also inherit the symplectic and complex structures of original manifold.

\begin{Thm}[Mardsen-Weinstein reduction~\cite{Redukcja}]
 Suppose $(M,\omega)$ is a symplectic manifold with Hamiltonian action of compact group $G$ and a momentum map $\mathbf{m}$. Suppose 0 is a regular value of $\mathbf{m}$ and $G$ acts freely on $\mathbf{m}^{-1}(0)$ then:
 \begin{enumerate} \vspace{-1em}
  \item $\displaystyle M_{red} := \mathbf{m}^{-1}(0)/G$ is a manifold called the reduced space, 
  \item $\displaystyle \pi : \mathbf{m}^{-1}(0) \rightarrow M_{red}$ is a principal $G$-fibration,
  \item $M_{red}$ inherits the symplectic structure of $M$, namely there is a symplectic structure $\sigma$ on $M_{red}$ such that
  \[
   i^*\omega = \pi^*\sigma,
  \]
  where $i : \mathbf{m}^{-1}(0) \rightarrow M$ is the inclusion map.
 \end{enumerate}

\end{Thm}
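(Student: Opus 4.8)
\section*{Proof proposal for the Marsden--Weinstein reduction theorem}

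The plan is to establish the three assertions in sequence, deriving each from the regularity of $0$ and the freeness of the $G$-action on $\mathbf{m}^{-1}(0)$, since all the geometric content is encoded in these two hypotheses. First I would argue that $\mathbf{m}^{-1}(0)$ is a smooth submanifold. Because $0$ is a regular value, the differential $d\mathbf{m}_x$ is surjective at every $x \in \mathbf{m}^{-1}(0)$, so by the regular value theorem (the preimage theorem) the level set is an embedded submanifold of codimension equal to $\dim G = \dim \mathfrak{g}$. By Fact~\ref{fact} applied pointwise, surjectivity of $d\mathbf{m}_x$ also records that the action is locally free along $\mathbf{m}^{-1}(0)$, which is consistent with the freeness hypothesis. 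For the first item, the freeness together with compactness of $G$ makes the action on $\mathbf{m}^{-1}(0)$ free and proper, so the Quotient Manifold Theorem invoked in the excerpt gives directly that $M_{red} = \mathbf{m}^{-1}(0)/G$ is a smooth manifold; this also yields the second item, that $\pi$ is a principal $G$-fibration.

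For the third and main item I would construct $\sigma$ and verify it is well-defined, closed, and nondegenerate. The key linear-algebra step is to identify the tangent space $T_x \mathbf{m}^{-1}(0)$ and the tangent space to the $G$-orbit inside it. Differentiating the defining relation of the momentum map, I would show that $T_x\mathbf{m}^{-1}(0) = \ker d\mathbf{m}_x$ is exactly the $\omega$-orthogonal complement of the orbit tangent space $\mathfrak{g}\cdot x = \{Y^\# : Y \in \mathfrak{g}\}$; this is the standard computation using $d\langle \mathbf{m}, Y\rangle = -\omega(Y^\#,\cdot)$, so $A \in \ker d\mathbf{m}_x$ if and only if $\omega_x(Y^\#,A)=0$ for all $Y$. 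Since freeness makes the orbit directions linearly independent and isotropic (the orbit is coisotropic in the level set), the restriction $i^*\omega$ descends to a well-defined bilinear form on the quotient $T_x\mathbf{m}^{-1}(0)/(\mathfrak{g}\cdot x) \simeq T_{\pi(x)}M_{red}$. I would then define $\sigma$ by declaring $\pi^*\sigma = i^*\omega$, checking $G$-invariance of $i^*\omega$ (which follows from $t^*\omega = \omega$) to ensure the descent is consistent across the fiber.

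It remains to verify that $\sigma$ is genuinely a symplectic form. Closedness is immediate: $\pi^*d\sigma = d\pi^*\sigma = d\,i^*\omega = i^*d\omega = 0$, and since $\pi$ is a submersion $d\sigma = 0$. Nondegeneracy is the one point requiring the orthogonality computation in earnest: if $[A] \in T_{\pi(x)}M_{red}$ satisfies $\sigma([A],\cdot)=0$, then $\omega_x(A,B)=0$ for every $B \in T_x\mathbf{m}^{-1}(0)$, whence $A$ lies in the $\omega$-orthogonal of $\ker d\mathbf{m}_x = (\mathfrak{g}\cdot x)^{\omega}$, which by nondegeneracy of $\omega$ equals $\mathfrak{g}\cdot x$ itself, so $[A]=0$ in the quotient. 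I expect the main obstacle to be precisely this double-orthogonality bookkeeping: one must keep straight that $(\mathfrak{g}\cdot x)^{\omega} = \ker d\mathbf{m}_x$ and then that $((\mathfrak{g}\cdot x)^{\omega})^{\omega} = \mathfrak{g}\cdot x$, using dimension counts licensed by freeness (so $\dim \mathfrak{g}\cdot x = \dim G$) and the regular value hypothesis (so $\operatorname{codim}\mathbf{m}^{-1}(0) = \dim G$). Everything else is either a direct appeal to the Quotient Manifold Theorem or a formal pullback manipulation.
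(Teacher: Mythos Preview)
Your proof is correct and follows the standard textbook argument for Marsden--Weinstein reduction. Note, however, that the paper does not supply its own proof of this theorem: it is stated as a classical result with a citation to~\cite{Redukcja} and used as background, so there is no paper-side argument to compare against. Your outline --- regular value theorem for the level set, the Quotient Manifold Theorem for the quotient (free plus proper via compactness of $G$), descent of $i^*\omega$ using $T_x\mathbf{m}^{-1}(0) = (\mathfrak{g}\cdot x)^{\omega}$, and the double-orthogonal argument for nondegeneracy --- is exactly the standard one found in the references the paper cites (e.g.\ \cite{McDuffSalamon}).
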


\begin{Rem}
 In the case of commutative actions there is nothing special about the null value and the theorem will hold for any regular value of the momentum map. 
\end{Rem}
\vspace{-1em}
Let us pick a level set $Z := \mathbf{m}^{-1}(\xi)$ of some regular value $\xi$. It must lie inside $X^1$, but not necessarily in $X^0$, thus the reduction theorem is not directly applicable. However, since by invariance the action of $\mathbf{T}^k$ on $Z$ is almost free, $Z/\mathbf{T}^k$ will have a strucutre of an orbifold. Indeed, it is the canonical way of producing effective orbifolds~\cite{OrbifoldsBook}. Let us denote this orbifold by $W$. Moreover, if we perform the quotient operation on $Z \cap X^0$, we notice that $W$ has a structure of a symplectic orbifold, with the fundamental form defined outside its orbifold singularities. One can also check by computation that $W$ will also inherit the complex structure of $X$. Thus, we are dealing with a \Kahler orbifold.

Instead of considering the orbits of $\mathbf{T}^k$ one might also look at the action of the complex torus $\mathbf{T}_\zespolone^k$. The point $x \in X$ is called semistable iff the closure of $\mathbf{T}_\zespolone^k$-orbit through $x$ intersects the level set $\moment^{-1}(0)$. The point is called stable if the intersection happens at the point where $d\moment$ is of full rank. The sets of stable and semistable point of $X$ will be denoted by $X^s$ and $X^{ss}$ respectively. Note that in the case of the torus, the choice of specific value is arbitrary and we may replace it by any regular value. Moreover, for any choice of regular value the notions of stable and semistable sets will coincide.

Now we may look at the quotient $X^{ss}/\mathbf{T}_\zespolone^k$. It turns out that $X^{ss}/\mathbf{T}_\zespolone^k \simeq W$, thus outside of an analytic subset $X$ can be thought of as an orbifold principal fiber bundle over $W$, in the sense that it becomes a bona fide principal bundle outside of the singular points of $W$.

The principal fiber bundle structure ensures that there are local trivialisations of $X^0$. Let $(z,w) = (z_1,\ldots,z_k,\allowbreak w_1,\ldots,w_m)$ be local holomorphic coordinates on $X^0$ ($m + k = n$) coming from such a bundle trivialisation. That means that $\mathbf{T}^k_{\zespolone}$ acts on $z$'s by $(t + is)\cdot z_j \rightarrow z_je^{t_j +is_j}$. In particular, changing the coordinates by $z_j = e^{x_j +iy_j}$ we have the complex torus $\zespolone^k / 2\pi\mathbb{Z}^k$ acting on $(x,y)$'s by translations.

Let $\phi$ be a local potential for the form $\omega$ on such a coordinate patch, then
\begin{align*}
 \omega = \frac{i}{2}\left( \sum_{i,j = 1}^{k}\drugapochodna{\phi}{z_i}{\bar{z}_j}dz_i \right. &\wedge d\bar{z}_j + \sum_{i,j = 1}^{i =k , j=m}\drugapochodna{\phi}{z_i}{\bar{w}_j}dz_i\wedge d\bar{w}_j \\
 &+ \left. \sum_{i,j = 1}^{i=m,j=k}\drugapochodna{\phi}{w_i}{\bar{z}_j}dw_i\wedge d\bar{z}_j
   + \sum_{i,j = 1}^{m}\drugapochodna{\phi}{w_i}{\bar{w}_j}dw_i\wedge d\bar{w}_j \right).
\end{align*}
Changing $z$'s to $(x,y)$'s and keeping in mind that $\phi$ is invariant in $y$ the following terms in $\omega$ change
\begin{align*}
 \frac{i}{2}\phi_{z_i\bar{z}_j}dz_i\wedge d\bar{z}_j &= \frac{i}{8}\phi_{x_ix_j}(dx_i\wedge dx_j + idy_i\wedge dx_j - idx_i\wedge dy_j + dy_i\wedge dy_j) \\
 \frac{i}{2}\phi_{z_i\bar{z}_i}dz_i\wedge d\bar{z}_i &= \frac{1}{4}\phi_{x_ix_i}dx_i\wedge dy_i \\
 \frac{i}{2}\phi_{z_i\bar{w}_j}dz_i\wedge d\bar{w}_j &= \frac{i}{4}\phi_{x_i\bar{w}_j}(dx_i\wedge d\bar{w}_j + idy_i\wedge d\bar{w}_j) \\
 \frac{i}{2}\phi_{w_i\bar{z}_j}dw_i\wedge d\bar{z}_j &= \frac{i}{4}\phi_{w_i x_j}(i dy_j\wedge dw_i  - dx_j\wedge dw_i).
\end{align*}

The terms with derivatives in $w$'s only will stay the same and the mixed terms $dx_i\wedge dx_j$ and $dy_i \wedge dy_j$ will cancel with the terms coming from $dz_j\wedge d\bar{z}_i$, thus leaving
\[
 \frac{1}{4}\phi_{x_ix_j}(dx_i\wedge dy_j + dx_j\wedge dy_i).
\]
In those coordinates, the momentum map $\moment$ has the form $\nabla_x\phi$, i.e. the gradient of $\phi$ with respect to $x$ variables only. Indeed, in that case, the real torus acts linearly $y \rightarrow y+t$, so the fundamental vector fields are of the form $a^i\frac{\partial}{\partial y_i}$ for some constants $a^i$. It is enough to check the momentum map condition on some basis of $\rzeczywiste^k$, it might as well be the canonical basis. Thus one needs to prove the following:
\[
 -d \phi_{x_i} = \omega\left(\frac{\partial}{\partial y_i},\cdot\right),
\]
which follows easily form the previous formulas.

Suppose we take another invariant symplectic form $\tilde{\omega}$ in the cohomology class $[\omega]$. The $\partial\bar{\partial}$-lemma implies that there exists a global quasiplurisubharmonic function $\psi$ such that $\tilde{\omega} = \omega + i\partial\bar{\partial}\psi$~\cite[chapter 15]{Moroianu}. We stay in $(x,y,w)$ coordinates and look at the properties of this Hamiltonian action. The moment map will again be expressed locally as $\nabla_x(\phi + \psi)$. Moreover, there exists an equivariant symplectomorphism between the two symplectic forms, let us call it $s$ -- this is a simple consequence of the Moser's stability theorem~\cite{McDuffSalamon} adapted to the equivairnat setting. It is now a matter of calculation to show that in that case the momentum maps are also related by $s$, namely that $\nabla_x(\phi + \psi) = (\nabla_x \phi) \circ s$. That implies that the level sets are equivariantly symplectomorphic, hence they are isomorphic as principal fiber bundles, which in turn implies that $W$ depends only on the cohomology class.

\paragraph{Equivariant cohomology.} We would like to connect the cohomology of $W$ to that of $X$. To this end we need to use the notion of equivariant cohomology. We will not introduce this set of ideas in detail as we only need a handful of results, instead we would like to recommend some references (\cite{GuilleminSternbergdeRham}, \cite{BottEquivariantCohomology},\cite[Appendix C]{GGK}). The only facts from this theory we would like to use will be contained in this paragraph. The inclusion map $i: Z_\xi \rightarrow X$, with $Z_\xi$ being as before a level set of $\moment$ for some regular value $\xi$, induces the equiviariant cohomology mapping
\[
 i^*: H_{\mathbf{T}^k}(X) \longrightarrow H_{\mathbf{T}^k}(Z_\xi).
\]
It turns out that $H_{\mathbf{T}^k}(Z_\xi) = H(W)$, with standard, say de Rham cohomology of $W$. In this setting the de Rham cohomology of $W$ is nothing more than the cohomology of $\mathbf{T}^k$-invariant forms on $Z_\xi$, which in fact is isomorphic to the real cohomology of $W$ as a topological space~\cite{OrbifoldsBook}. We are only really interested in second cohomology groups and in this case $H_{\mathbf{T}^k}^2(X)$ has an interpretation that shall look familiar -- it consists of pairs $(\omega, \Phi)$ of closed invariant two-forms and equivariant smooth functions from $X$ to $\rzeczywiste^k$ such that for each $Y \in \rzeczywiste^k$
\[
 \omega(Y^\#,\cdot \;) = -d\Phi^Y.
\]
In that case, $i^*$ called the Kirwan map, sends an invariant symplectic form $\omega$ on $X$ to its reduction on the level set of appropriate momentum mapping
\[
 \omega \rightarrow \sigma(\xi).
\]
It is not hard to see, that it will be exactly the form given by the Mardsen-Wienstein reduction. As any two invariant symplectic forms in the same de Rham cohomology class are also equivariantly cohomologous, we conclude that the reduction is well defined on the level of cohomology classes.

\paragraph{Compact \Kahler manifolds with torus action.} Let us gather everything together now to describe our setting. First let us state the refinement of the convexity theorem proved by Atiyah in~\cite{Atiyah}.
\begin{Thm}
 Suppose there is a Hamiltonian action of a torus on a compact \Kahler manifold $X$. Let $Y$ be an orbit of $\mathbf{T}^k_\zespolone$ through some point. If $Z_j$ are critical points of $\mathbf{m}$ that intersect $\overline{Y}$ let $c_j = \mathbf{m}(Z_j)$, then
 \begin{enumerate} \vspace{-1em}
  \item $\mathbf{m}(\overline{Y})$ is the convex polytope $\Delta'$ with vertices $c_j$,
  \item for each open face $\zeta$ of $\Delta'$, the preimage $\moment^{-1}(\zeta)\ \cap \overline{Y}$ is a single $\mathbf{T}_\zespolone$ orbit,
  \item $\moment$ induces a homeomorphism of $\overline{Y}/\mathbf{T}^k$ onto $\Delta'$.
 \end{enumerate}
\end{Thm}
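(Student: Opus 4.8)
Since this is precisely the refinement of the convexity theorem due to Atiyah~\cite{Atiyah}, the plan is to reproduce his Morse-theoretic argument, localised to the closure of a single complex-torus orbit. The engine is the identity $\nabla_g \moment^Y = JY^\#$ recorded above: for any $Y\in\mathfrak t$ the negative gradient flow of the Hamiltonian $\moment^Y$ is a reparametrisation of the flow of the imaginary generator $JY^\#$, hence it preserves every $\torus^k_\zespolone$-orbit, and in particular it preserves $\overline Y$. So the first step is to forget $X$ and work intrinsically on $\overline Y$, analysing the family of functions $\moment^Y|_{\overline Y}$, $Y\in\mathfrak t$. Note that $\overline Y$ is a compact complex analytic subvariety, invariant under $\torus^k_\zespolone$ and containing the dense orbit $Y$, i.e. it is an (abstract) compact toric variety.

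The second step supplies the Morse data. For each $Y$ the function $\moment^Y$ is Morse--Bott; its critical set is the zero locus of $Y^\#$, equivalently the fixed-point set of the subtorus $\overline{\{\exp tY : t\in\rzeczywiste\}}$, which is a $J$-invariant submanifold, and the transverse Hessian is $J$-invariant and non-degenerate, so every critical submanifold has even index and even coindex. Because $\overline Y$ may be singular and $i^*\omega$ only semipositive, I would first pass to a $\torus^k_\zespolone$-equivariant toric resolution $\pi:\widehat Y\to\overline Y$ --- smooth and projective, an isomorphism over the dense orbit $Y$, carrying an invariant \Kahler form $\omega_\varepsilon$ close to $\pi^*\omega$ --- so that the Morse theory is carried out on the smooth $\widehat Y$, and transfer the conclusions back through $\pi$. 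This is legitimate because $\pi$ is proper, surjective, equivariant and an isomorphism over $Y$, so that momentum images, fibres over momentum values, and the orbit stratification all descend.

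The third step is the induction on $k$ producing the polytope, its faces, and the quotient homeomorphism. For $k=1$ the absence of critical submanifolds of index or coindex $1$ makes every level and sublevel set of $\moment=\moment^Y$ connected; hence $\moment(\widehat Y)$ is a closed interval whose endpoints are the extreme critical values $c_j=\moment(Z_j)$, a regular fibre is a connected manifold carrying a locally free $S^1$-action (Fact~\ref{fact}) of the correct dimension and is therefore a single circle, and consequently $\moment^{-1}$ of the open interval is a single $\torus^1_\zespolone$-orbit, namely the dense one; invariance together with injectivity on orbits then gives the homeomorphism onto the interval. For $k>1$, restricting $\moment$ along rational subtori and using connectedness of $\moment^{-1}$ of affine lines shows $\moment(\widehat Y)$ is an intersection of finitely many half-spaces, each supporting hyperplane passing through critical values, which identifies it as the polytope $\Delta'$ with vertices $c_j$. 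For the face--orbit correspondence and the final homeomorphism I would combine Marsden--Weinstein reduction over the relative interior of each face with the equivariant slice theorem at a point of $Z_j$ (a local \Kahler normal form of the action), exhibiting $\moment^{-1}(\zeta)\cap\overline Y$ locally, hence globally by connectedness, as a single orbit; properness of $\moment|_{\overline Y}$ together with part~2 then makes $\moment$ factor through a continuous bijection $\overline Y/\torus^k\to\Delta'$, which is a homeomorphism by compactness.

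\emph{Main obstacle.} The geometric content --- gradient flow tangent to the complex orbits, even index and coindex of the critical submanifolds, connectedness of fibres --- is robust; the friction is entirely in the singularities of $\overline Y$. The reduction and slice theorems are smooth-category statements, $i^*\omega$ degenerates along the singular locus, and one must verify that the toric resolution is ``momentum-transparent'': that it introduces no spurious orbits, no spurious critical values, and does not alter the face stratification of the image. I expect that bookkeeping to be the delicate part of the argument; the alternative is simply to invoke Atiyah's paper, where the \Kahler case is treated directly without resolving.
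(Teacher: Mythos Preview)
The paper does not prove this theorem at all: it is quoted verbatim as Atiyah's refinement of the convexity theorem and attributed to~\cite{Atiyah}, with no argument given. So there is nothing to compare against on the paper's side; the author simply invokes the result.

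Your sketch is a faithful outline of Atiyah's Morse-theoretic strategy, and the core mechanism --- even index and coindex of the critical manifolds of $\moment^Y$ forcing connectedness of fibres and sublevels, together with the identification of the negative gradient flow with the $H$-action --- is exactly right. The one place I would push back is the detour through a toric resolution. A resolution $\pi:\widehat Y\to\overline Y$ equipped with an honest \Kahler form $\omega_\varepsilon$ will in general have a \emph{different} moment polytope (typically $\Delta'$ with its vertices truncated), so the ``momentum-transparency'' you worry about genuinely fails for fixed $\varepsilon$; you would need a limit $\varepsilon\to 0$, which re-introduces all the degeneration you were trying to avoid. Atiyah does not resolve: he works with $\overline Y$ as a compact analytic subvariety of the ambient smooth $X$, uses that the gradient flow of $\moment^Y$ on $X$ is the $JY^\#$-flow and hence preserves $\overline Y$ and each of its strata, and reads off the face structure from the lower-dimensional orbits in $\overline Y\setminus Y$ (which are themselves orbit closures for subtori, feeding the induction). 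Your final parenthetical --- ``the alternative is simply to invoke Atiyah's paper'' --- is in fact what the paper does, and is also the cleaner route for a self-contained proof.
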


\begin{Rem}
 Let us note here that if $\dim X \neq 2k$ then $\Delta'$ might happen to be of the same dimension as the full polytope $\Delta = \moment(X)$ and yet be a proper subset of it.
\end{Rem}

Thus putting everything together we arrive at the following picture. The moment polytope divides into open convex chambers of regular values. The preimages of those chambers are $(\zespolone^*)^k$-principal fiber bundles over \Kahler orbifolds $W$ that possibly differ chamber to chamber. 

Suppose we would like to use the information that is contained on each orbit. Let us restrict ourselves to a single chamber $\Delta'$ in $\Delta$. We can use the momentum map to define the map $\moment^+$ on $X^1$ -- a bundle map between $(\zespolone^*)^k$-bundle over $W$ and $\Delta'$-bundle over the same base. We simply write the map down locally
\[
 \moment^+: (\zespolone^*)^k \times U \ni (z,w) \rightarrow (\moment(z,w),w) \in \Delta \times U,
\]
for any local trivialistion over $U \subseteq W$ and notice that it is well defined by the fact that $\moment$ is a global map. What is worth noting here is that $\moment^+$ 'untangles' the fibration $X^0$ into simple cartesian product.

%\paragraph{The Duistermaat-Heckamn theorem and the Chern class of level set.} Let $q$ be the regular value of the momentum mapping. Take a nbhd of $q$ in $\Delta$ that consists of regular values and let $q+\epsilon$ lie in that nbhd. Let $[\sigma_q]$ denote the cohomology class of the reduced form of the base of level set $M^{-1}(q)$, then the Duistermaat-Heckamn theorem says
%\[
 %[\sigma_{q+\epsilon}] = [\sigma_q] + c\cdot \epsilon,
%\]
%where $c$ is the $\mathfrak{t}$-valued Chern class of the fibration $M^{-1}(q)$, which is (locally) independent of $\epsilon$.

%The above reuslt allows one to describe the pushforward of the symplectic volume through momentum map $M$. Locally, in some small nbhd $U$ in $\Delta$ containing no singular values, this measure can be expressed as
%\[
% \int_U f(p)dp = \int_U \, \int_{M^{-1}(p)} (\pi^*\sigma_{p_0} + (p-p_0)\cdot c)^{m}\wedge \alpha^k dp,
%\]
%with $\alpha$ - any principal bundle connection on $M^{-1}(p)$. Gluing those together on gets, that the density of the pushforward is a piecewise polynomial. The measure on $\Delta$ created this way is an invariant of the class $[\omega]$ and will be called the Duistermaat-Heckman measure. 

\section{Some analytic and measure-theoretic preliminaries}\label{sec:transport}

\subsection{Optimal transport and Monge-Amp\`{e}re equation}

We say that the function $T:\rzeczywiste^n \rightarrow \rzeczywiste^n$ transports probability measure $\mu$ to probability measure $\nu$ if for any Borel set $A$ 
the following equality holds
\[
 \nu[A] = \mu[T^{-1}(A)].
\]
Alternatively we say that $T$ pushes $\mu$ forward to $\nu$ and denote the push-forward measure by $T_{\#}\mu$.

In general there will be a lot of such maps, so it is natural to put some optimality constraints on them. The best understood constraint and in some cases the natural one is minimizing the quadratic cost, i.e. the transport map should  minimize the following functional
\[
 \int_{\rzeczywiste^d} |x - T(x)|^2 \,d\mu.
\]
In general there might not be a solution and if it exists it might not be unique, some regularity assumptions for the measures must be added. For example, one can assume that the measures have finite second moments and $\mu$ is absolutely continuous. In that case the solution exists and has a form of $T=\nabla \phi$ for some convex function $\phi$. For thorough discussion of this problem, the reader might consult~\cite{Villani}.

Supposing that a solution exists, by the transport condition we get
\[
 \int\chi_A \,d\nu = \int_A\, d\nu = \int_{(\nabla\phi)^{-1}(A)}\,d\mu = \int \chi_A\circ \nabla\phi\, d\mu.
\]
That can easily be generalized to get that for any $f \in C_b(\rzeczywiste^n)$
\begin{equation}\label{eq:rownanie}
 \int f \,d\nu = \int f\circ\nabla\phi\, d\mu. 
\end{equation}
Here $C_b(\rzeczywiste^n)$ denotes the set of continuous and bounded functions on $\rzeczywiste^n$.

Suppose now that $d\nu = g(x)dx$ for some density $g(x)$ and $\phi$ is a $C^2$ function. By change of variables formula we get that
\[
  \int f(\nabla\phi(x))\, d\mu = \int f(\nabla \phi(x))g(\nabla \phi(x))\det D^2\phi\, dx
\]
and that provides one with a notion of solution to the transported \Monge equation
\[
 MA^{\rzeczywiste}_g(\phi) := g(\nabla \phi(x))\det D^2\phi = \mu
\]
as long as the optimal transport map exists. There is a rich theory of regularity results for this equation. When the solution $\phi$ becomes singular, its gradient $\nabla\phi$ turns into the subgradient $\partial\phi$ defined by
\[
 p \in \partial \phi(x) \quad \text{iff} \quad \forall z, \ \phi(z) \geq \phi(x) + \langle p, z-x \rangle.
\]

As we mentioned, for any two probability measures the optimal transport solution might not exist. However, under a mild regularity assumption it is still possible to transport one to another through a subgradient of convex function, so that the condition~(\ref{eq:rownanie}) is still satisfied. This is the content of the following important theorem. 

\begin{Thm}[McCann~\cite{McCann}]
 Let $\mu, \nu$ be probability measures on $\rzeczywiste^n$ and suppose that $\mu$ vanishes on Borel subsets of $\rzeczywiste^n$ of Hausdorff dimension at most $n-1$. Then there exists a convex function $\psi$ on $\rzeczywiste^n$ whose subgradient $\partial\psi$ pushes $\mu$ forward to $\nu$. $\partial\psi$ is uniquely determined $\mu$-almost everywhere.
\end{Thm}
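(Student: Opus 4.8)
The plan is to avoid the quadratic cost functional — which may well be infinite under the stated hypotheses — and to build the map directly from the order-theoretic notion of cyclical monotonicity. Recall that a set $\Gamma \subseteq \rzeczywiste^n \times \rzeczywiste^n$ is \emph{cyclically monotone} if for every finite cycle $(x_0,y_0),\dots,(x_N,y_N)=(x_0,y_0)$ in $\Gamma$ one has $\sum_i \skalarny{y_i}{x_{i+1}-x_i} \le 0$, and that by Rockafellar's theorem every such $\Gamma$ is contained in the graph of the subgradient $\partial\psi$ of a lower semicontinuous convex function $\psi$, produced explicitly as a supremum of affine functions indexed by chains in $\Gamma$. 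The whole problem thus reduces to producing a coupling $\pi$ — a probability measure on $\rzeczywiste^n\times\rzeczywiste^n$ with marginals $\mu$ and $\nu$ — whose support is cyclically monotone, and then reading off $\psi$.

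First I would establish existence of a cyclically monotone coupling. Since no moment bound is assumed, I cannot simply minimize $\int|x-y|^2\,d\pi$ over couplings, so I would proceed by approximation: choose compactly supported probability measures $\mu_k$ and $\nu_k$ converging weakly to $\mu$ and $\nu$ (for instance restrictions to large balls, renormalized). For each truncated problem the quadratic cost is finite, an optimal coupling $\pi_k$ exists by weak-$*$ compactness and lower semicontinuity of the cost, and optimality forces $\operatorname{spt}\pi_k$ to be cyclically monotone, since a finite rearrangement along a bad cycle would strictly lower the cost. The family $\{\pi_k\}$ is tight because its marginals form weakly convergent, hence tight, families; a subsequence therefore converges weakly to some coupling $\pi$ of $\mu$ and $\nu$, and cyclical monotonicity of the support is a closed condition under weak convergence, so $\operatorname{spt}\pi$ is cyclically monotone as well.

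With $\pi$ in hand, Rockafellar's theorem supplies a convex $\psi$ with $\operatorname{spt}\pi \subseteq \operatorname{graph}(\partial\psi)$, and here the hypothesis on $\mu$ enters decisively. A finite convex function on $\rzeczywiste^n$ fails to be differentiable only on a set that is $(n-1)$-rectifiable, hence of Hausdorff dimension at most $n-1$, and is therefore $\mu$-null by assumption; consequently $\partial\psi(x)=\{\nabla\psi(x)\}$ for $\mu$-a.e.\ $x$. Thus $\pi$ is concentrated on the graph of the single-valued map $T=\nabla\psi$, and comparing marginals gives $T_{\#}\mu=\nu$, i.e.\ $\nabla\psi$ pushes $\mu$ forward to $\nu$. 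One minor point to verify is that $\mu$ charges only the set where $\partial\psi\neq\emptyset$, so that $\psi$ is finite and locally Lipschitz $\mu$-a.e.

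For uniqueness I would argue by Fenchel--Young duality. If $\nabla\psi_0$ and $\nabla\psi_1$ both push $\mu$ to $\nu$, let $\pi_0,\pi_1$ be the associated graph couplings and let $\psi_0^*$ be the Legendre transform of $\psi_0$. Integrating the inequality $\psi_0(x)+\psi_0^*(y)\ge \skalarny{x}{y}$ against $\pi_0$ and against $\pi_1$, and running the symmetric comparison with $(\psi_1,\psi_1^*)$, one finds $\int \skalarny{x}{y}\,d\pi_0 = \int \skalarny{x}{y}\,d\pi_1$, so the nonnegative Fenchel--Young defect $\psi_0(x)+\psi_0^*(y)-\skalarny{x}{y}$ has vanishing $\pi_1$-integral and hence vanishes $\pi_1$-a.e.; this forces $\pi_1$ to be concentrated on $\partial\psi_0$ as well, and regularity of $\mu$ makes $\partial\psi_0$ a graph $\mu$-a.e., whence $\nabla\psi_1=\nabla\psi_0$ $\mu$-a.e. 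I expect the main obstacle to be exactly the absence of any moment assumption: it renders the quadratic cost, the pairing $\skalarny{x}{y}$, and the potentials $\psi_0,\psi_0^*$ potentially non-integrable, so neither the existence step nor the Fenchel--Young step applies verbatim. In both places the remedy is truncation to large balls followed by a limiting argument retaining only the cyclical-monotonicity content, which is integrability-free; the precise matching of ``$\mu$ gives no mass to $(n-1)$-dimensional sets'' with ``convex functions are differentiable off an $(n-1)$-dimensional set'' is what converts the monotone coupling into an honest, $\mu$-a.e.\ unique map.
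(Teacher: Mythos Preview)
The paper does not prove this theorem at all: it is quoted in the preliminaries section as a known result of McCann, with a citation and no argument, and is then used as a black box in Section~\ref{sec:wyniki}. So there is no ``paper's own proof'' to compare against.

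Your outline is essentially the standard route to McCann's theorem (and close to McCann's original argument): produce a cyclically monotone coupling by compact truncation and weak limits, invoke Rockafellar to place its support inside the subgradient graph of a convex function, and then use the fact that a convex function fails to be differentiable only on a countably $(n-1)$-rectifiable set---precisely the sets $\mu$ is assumed to ignore---to turn the subgradient into a genuine map. The one place where your sketch is thin is uniqueness: the Fenchel--Young computation you write down really does need $\psi_0,\psi_0^*$ and $\skalarny{x}{y}$ to be integrable, and ``truncate and pass to the limit'' is not quite enough as stated, since after truncation the two transports no longer share the same target and the symmetric comparison breaks. McCann's actual argument avoids integrals altogether at this step, working instead with the pointwise identity $\psi_0(x)+\psi_0^*(y)=\skalarny{x}{y}$ on the support of each coupling and a monotonicity/rigidity argument; if you want a self-contained proof you should tighten that part. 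For the purposes of this paper, however, none of this is required---the theorem is simply cited.
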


Of course the assumption on the null sets of $\mu$ can not be abandoned. For example if $\mu = \delta_x$ and $\nu$ is not a point measure, then if $A$ is such a set that $0 < \nu[A] < 1$ one gets that for any convex function $\phi$, $\nu[A] \neq \mu[(\partial\phi)^{-1}(A)]$ since the latter must always be either 0 or 1.

\subsection{Rokhlin's disintegration theorem}

Suppose $Z$ is a compact metric space equipped with a Borel probability measure $\mu$. Let $\mathcal{P}$ be a partition of $Z$ into measurable sets, it is then straightforward to define a measure space structure on $\mathcal{P}$. Namely if $\pi:Z \rightarrow \mathcal{P}$ is a projection sending a point to the (unique) partition set containing it, then we define a probability measure $\widehat{\mu}$, by simply averaging $\mu$ over the preimages of $\pi$
\[
 \widehat{\mu}(\mathcal{Q}) = \int_{\pi^{-1}(\mathcal{Q})}d\mu.
\]

\begin{Def}
 Disintegration of $\mu$ with respect to $\mathcal{P}$ or a system of conditional measures of $\mu$ with respect to $\mathcal{P}$ is a family of probability measures $\eta_p$ for $p \in \mathcal{P}$ such that
 \begin{enumerate} \vspace{-1em}
  \item $\eta_p(p) = 1$ for $\widehat{\mu}$-a.e. $p \in \mathcal{P}$,
  \item for every continuous function $\displaystyle f:Z \rightarrow \rzeczywiste$ the function $\displaystyle \mathcal{P} \ni p \rightarrow \int_p f\,d\eta_p$ is measurable and $\displaystyle \int f\,d\mu = \int\left(\int_p f\,d\eta_p\right)\,d\widehat{\mu}(p)$.
 \end{enumerate}
\end{Def}

The question for which spaces and their partitions the systems of conditional measures exists arises immediately. For that we need one more definition.

\begin{Def}
 A partition $\mathcal{P}$ of $Z$ is called measurable if there exists a countable family of $\{E_i\} \subseteq 2^Z$ and a subset $W \subseteq Z$ of full measure such that for each $q \in \mathcal{P}$
 \[
  q \cap W = E^*_1 \cap E^*_2 \cap \ldots \cap W,
 \]
 where $E^*_j$ is either $E_j$ or $Z\setminus E_j$.
\end{Def}

Equipped with that notion we can state the theorem of Rokhlin~\cite{Rokhlin}

\begin{Thm}
 If $\mathcal{P}$ is a measurable partition of $(Z,\mu)$ then $\mu$ disintegrates with respect to $\mathcal{P}$. Moreover the conditional measures are unique $\widehat{\mu}$-a.e. 
\end{Thm}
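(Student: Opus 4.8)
\section*{Proof proposal}

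The plan is to realise the disintegration as a regular conditional probability for $\mu$ given the $\sigma$-algebra $\mathcal{A}$ of (essentially) $\mathcal{P}$-saturated sets, and then read off the two properties of the definition. First I would use measurability of $\mathcal{P}$ to fix the countable family $\{E_i\}\subseteq 2^Z$ and the full-measure set $W$ supplied by the definition, and set $\mathcal{A}_n:=\sigma(E_1,\dots,E_n)$, $\mathcal{A}:=\sigma\!\left(\bigcup_n\mathcal{A}_n\right)$; modulo $\mu$ a set lies in $\mathcal{A}$ exactly when, intersected with $W$, it is a union of partition elements, so $\mathcal{A}$ is the pullback under $\pi$ of the measurable structure on $\mathcal{P}$. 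Since $Z$ is compact metric, $C(Z)$ is separable; fix a countable $\mathbb{Q}$-subalgebra $\mathcal{D}\subseteq C(Z)$ that is sup-norm dense and contains the constants. For $f\in\mathcal{D}$ the martingale $\big(\mathbb{E}[f\mid\mathcal{A}_n]\big)_n$ converges $\mu$-a.e.\ and in $L^1$ to $\mathbb{E}[f\mid\mathcal{A}]$, and on each atom $A$ of the finite $\sigma$-algebra $\mathcal{A}_n$ it equals the average $\mu(A)^{-1}\int_A f\,d\mu$. Removing one $\mu$-null set — the union over $f\in\mathcal{D}$ of the exceptional sets of these martingales — I obtain a full-measure set on which, for every $z$ in it, $L_z(f):=\lim_n\mathbb{E}[f\mid\mathcal{A}_n](z)$ exists for all $f\in\mathcal{D}$, is $\mathbb{Q}$-linear and positive with $L_z(1)=1$ and $|L_z(f)|\le\|f\|_\infty$; hence it extends to a positive normalised functional on $C(Z)$, and by the Riesz representation theorem there is a Borel probability measure $\eta_z$ with $\int f\,d\eta_z=L_z(f)$, and then, by uniform approximation together with the contraction property $\|\mathbb{E}[f\mid\mathcal{A}]-\mathbb{E}[g\mid\mathcal{A}]\|_\infty\le\|f-g\|_\infty$, with $\int f\,d\eta_z=\mathbb{E}[f\mid\mathcal{A}](z)$ for every $f\in C(Z)$.

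Next I would promote this identity from continuous to bounded Borel integrands: the collection of bounded Borel $g$ for which $z\mapsto\int g\,d\eta_z$ is measurable and equals a version of $\mathbb{E}[g\mid\mathcal{A}]$ $\mu$-a.e.\ contains $C(Z)$ and is closed under finite linear combinations and bounded monotone limits, hence is everything by the monotone class theorem. Because $\mathbb{E}[g\mid\mathcal{A}]$ is $\mathcal{A}$-measurable and $\mathcal{A}$ is the $\pi$-pullback of the structure on $\mathcal{P}$, the map $z\mapsto\eta_z$ is $\mu$-a.e.\ constant on the elements of $\mathcal{P}$ (testing against the countably many $f\in\mathcal{D}$) and so descends to a family $(\eta_p)_p$ defined for $\widehat\mu$-a.e.\ $p$. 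Property (2) is then the tower property, $\int f\,d\mu=\int\mathbb{E}[f\mid\mathcal{A}]\,d\mu=\int\!\big(\int_p f\,d\eta_p\big)\,d\widehat\mu(p)$, with the inner integrand measurable by the previous step and the formula extending to bounded Borel $f$ by monotone classes again. Property (1) is where the particular generators matter: since $E_i\in\mathcal{A}$, applying the extended identity to $g=\chi_{E_i}$ gives $\eta_z(E_i)=\mathbb{E}[\chi_{E_i}\mid\mathcal{A}](z)=\chi_{E_i}(z)$ for $\mu$-a.e.\ $z$; intersecting over the countably many $i$ shows $\eta_z$ has full mass on $\bigcap_i E_i^*\cap W=\pi(z)\cap W$, and $\eta_z(Z\setminus W)=0$ $\mu$-a.e.\ because $\mu(Z\setminus W)=0$ forces $\int\eta_p(Z\setminus W)\,d\widehat\mu=0$ with a nonnegative integrand. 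Thus $\eta_p$ is concentrated on $p$ for $\widehat\mu$-a.e.\ $p$.

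For uniqueness, if $(\eta_p)$ and $(\eta'_p)$ both satisfy the definition then, for each fixed $f\in C(Z)$, both $p\mapsto\int_p f\,d\eta_p$ and $p\mapsto\int_p f\,d\eta'_p$ become, after composition with $\pi$, versions of $\mathbb{E}[f\mid\mathcal{A}]$ and hence agree $\widehat\mu$-a.e.; letting $f$ range over the countable dense set $\mathcal{D}$ and intersecting the resulting full-measure sets gives $\int f\,d\eta_p=\int f\,d\eta'_p$ for all $f\in C(Z)$ and $\widehat\mu$-a.e.\ $p$, whence $\eta_p=\eta'_p$ a.e.\ since two Borel probability measures on a compact metric space that agree on $C(Z)$ coincide. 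I expect the main obstacle to be the regularity upgrade in the first step — converting the \emph{family} of conditional expectations $\mathbb{E}[\,\cdot\mid\mathcal{A}]$, each defined only off its own null set, into a single measure-valued map $z\mapsto\eta_z$ — together with arranging that every ``a.e.''\ clause holds simultaneously off one $\mu$-null set; both hinge on the separability of $C(Z)$, i.e.\ on $Z$ being a standard Borel (here compact metric) space, which is precisely why that hypothesis is imposed. A close second is reconciling the weak-$*$ functional $L_z$ defining $\eta_z$ with the set-theoretic concentration required in property (1), which is the reason the generators $E_i$ — and not merely continuous test functions — must be carried through the whole argument.
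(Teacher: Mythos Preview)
The paper does not prove this theorem at all: it is quoted as a classical result of Rokhlin, with a reference, and is used later only as a black box to disintegrate the measure along the $\mathbf{T}^k_{\mathbb{C}}$-orbits. So there is no ``paper's own proof'' to compare with.

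That said, your sketch is the standard proof one finds in the literature (e.g.\ via regular conditional probabilities on a standard Borel space), and the outline is essentially correct: martingale convergence along the finite $\sigma$-algebras $\mathcal{A}_n=\sigma(E_1,\dots,E_n)$, a countable dense $\mathbb{Q}$-algebra in $C(Z)$ to manufacture a single null set, Riesz to get $\eta_z$, a monotone class argument to pass to bounded Borel integrands, and then the generators $E_i$ to force $\eta_z$ to sit on the partition element through $z$. One small point worth tightening: the paper's definition only says $\{E_i\}\subseteq 2^Z$, not that the $E_i$ are Borel, whereas your construction of $\mathcal{A}_n$ and the identity $\eta_z(E_i)=\mathbb{E}[\chi_{E_i}\mid\mathcal{A}](z)$ implicitly require the $E_i$ to be $\mu$-measurable; in Rokhlin's original setup one works in a Lebesgue space where this is part of the data, so you should either state that hypothesis explicitly or note that one may replace each $E_i$ by a Borel set differing from it by a $\mu$-null set without affecting the partition modulo null sets.
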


\section{Proofs and calculations}\label{sec:wyniki}

\paragraph{The induced form on the level orbifolds.} Since the results are local let us fix some open convex chamber $\Delta$ and some local trivialisation of $\moment^{-1}(\Delta)$ in the $(x,y,w)$ variables mentioned before. Let $\phi$ be the local potential of the \Kahler from, as we mentioned it is convex in $x$. It will be worthwhile to study the relationship between the potential and its Legendre transform defined by
\[
 \phi^*(p,w) = \sup_{x \in \rzeczywiste^k}\{x\cdot p - \phi(x,w)\}.
\]
This is not a function on $W \times \Delta$ since it is not globally defined. On a local atlas of $W$ $\phi^*$ is convex in $p \in \Delta$ with fixed $w \in W$ and plurisuperharmonic in $w$ for fixed $p$~\cite{Kiselman}. If $\phi^*$ happens to be differentiable in $p$ its derivative is well defined globally and as such is an inverse of $\moment^+$ between $\left(\moment^{-1}(\Delta) \cap X^0\right)/\mathbf{T}^k$ and $\Delta^0 \times W$.

In fact on each orbit $O$ of $\torus_\zespolone^k$ in $X^1$ the momentum map $\moment$ will be a diffeomorphism of $O/\torus^k$ to $\Delta^0$. Indeed, by the definition of $X^1$, \eqref{fact} and the constant rank theorem we get that $\moment|_O$ is a local diffemorphism. That it is a global diffemorphism follows from the fact that $\torus^k_\zespolone$ act transitively on $O$ and the function $\moment^Y$ is strictly increasing along the flow of $JY^\#$. Note that by the properties of the Legendre transform ~\cite[2.1.3]{Villani} the inverse of $\moment|_O$ will be exactly $\nabla_p\phi^*(\cdot,w)$, with $w$ the parameter of the orbit $O$ in $W$. Hence, from this and the Implicit Function Theorem we imply the following identities
\begin{align*}
 & \nabla_x\phi(\nabla_p \phi^*(c,w),w) = c \\
 & \nabla_p\phi^*_{w_i} = -(D^2_x\phi)^{-1}\nabla_x\phi_{w_i}.
\end{align*}
Using those we can compute the local expression for the reduced form with little effort, namely 
\begin{Prop}
 For any regular value $q$ the reduced symplectic form can be locally expressed by
  \[
    \sigma_\phi(q) = - \frac{i}{2}\sum \phi^*_{w_i\bar{w}_j}(q,w)dw_i \wedge d\bar{w}_j.
  \]
\end{Prop}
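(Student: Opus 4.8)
The plan is to compute the reduced symplectic form directly in the local coordinates, exploiting the fact that the momentum level set is cut out by $\nabla_x \phi = q$ and that the reduced form is characterized by $i^*\omega = \pi^*\sigma_\phi(q)$. First I would fix the regular value $q$ and describe the level set $Z_q = \{\nabla_x\phi(x,w) = q\}$ explicitly: since $\phi$ is convex in $x$, for each $w$ the equation $\nabla_x\phi(x,w) = q$ has a unique solution $x = x(w) = \nabla_p\phi^*(q,w)$, so $Z_q$ is a graph over the $(y,w)$-coordinates and the quotient $Z_q/\torus^k$ is parametrized by $w$ alone. The restriction $i^*\omega$ is then obtained by plugging $x = x(w)$ (so $dx_i = \sum_j (x_i)_{w_j}dw_j + \text{conjugate terms}$) into the coordinate expression for $\omega$ derived in Section~\ref{sec:torus}, and the reduced form $\sigma_\phi(q)$ is what remains after the $dy$-directions (the fiber directions) are killed by passing to the quotient.

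The key computational step is to substitute the identity $\nabla_p\phi^*_{w_i} = -(D^2_x\phi)^{-1}\nabla_x\phi_{w_i}$ — equivalently $x_{w_i} = -(D^2_x\phi)^{-1}\phi_{x w_i}$ — into the pulled-back form. The terms of $\omega$ that survive on the quotient are the pure-$w$ term $\tfrac{i}{2}\sum\phi_{w_i\bar w_j}dw_i\wedge d\bar w_j$, together with contributions from the mixed terms $\tfrac{i}{2}\sum\phi_{z_i\bar w_j}dz_i\wedge d\bar w_j$ and its conjugate, and from the pure-$z$ term $\tfrac{i}{2}\sum\phi_{z_i\bar z_j}dz_i\wedge d\bar z_j$, once $dz_i$ (hence $dx_i$, since the $dy$'s drop) is replaced by $\sum_j x_{i,w_j}dw_j$. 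Collecting the $dw\wedge d\bar w$ coefficient, one gets
\[
 \phi_{w_i\bar w_j} + \sum_a \phi_{w_i \bar z_a}x_{a,\bar w_j} + \sum_b x_{b,w_i}\phi_{z_b\bar w_j} + \sum_{a,b}x_{b,w_i}\phi_{z_b\bar z_a}x_{a,\bar w_j},
\]
and substituting $x_{\cdot,w} = -(D_x^2\phi)^{-1}\phi_{xw}$ makes the last term equal to $+\sum \phi_{w_i\bar z_b}(D_x^2\phi)^{-1}_{ba}\phi_{z_a\bar w_j}$ while the two middle terms contribute $-2\sum\phi_{w_i\bar z_b}(D_x^2\phi)^{-1}_{ba}\phi_{z_a\bar w_j}$ (using that $D_x^2\phi$ is symmetric and real), so the whole thing collapses to $\phi_{w_i\bar w_j} - \phi_{w_i\bar z_a}(D_x^2\phi)^{-1}_{ab}\phi_{z_b\bar w_j}$. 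This is exactly the Schur complement, and by differentiating the identities relating $\phi$ and $\phi^*$ twice in $w$ one recognizes it as $\phi^*_{w_i\bar w_j}(q,w)$ evaluated at $x = \nabla_p\phi^*(q,w)$; the overall sign and the factor $-\tfrac{i}{2}$ come out of this identification, with the minus sign reflecting that $\phi^*$ is plurisuperharmonic in $w$.

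I expect the main obstacle to be bookkeeping rather than conceptual: one must be careful that $\phi^*$ is only locally defined, so the computation produces a priori only a local form, and one has to invoke the equivariant-cohomology / Kirwan-map discussion from Section~\ref{sec:torus} to know that these local forms patch into the globally defined reduced form $\sigma_\phi(q)$ independent of the trivialization. The other delicate point is keeping track of which real and complex Hessians appear where — distinguishing $D_x^2\phi$ (a $k\times k$ real symmetric matrix, which in the holomorphic coordinates equals $4$ times the relevant block of the complex Hessian up to the factors appearing in the $dz\to(x,y)$ conversion in Section~\ref{sec:torus}) from the complex Hessian blocks $\phi_{z_i\bar w_j}$, $\phi_{w_i\bar w_j}$ — so that the normalization constants $\tfrac{i}{2}$, $\tfrac18$, etc., land correctly. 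Once the Schur-complement identity $\phi^*_{w_i\bar w_j} = \phi_{w_i\bar w_j} - \phi_{w_i\bar z}(D_x^2\phi)^{-1}\phi_{z\bar w_j}$ is verified by implicit differentiation of $\nabla_x\phi(\nabla_p\phi^*(q,w),w)=q$, the Proposition follows.
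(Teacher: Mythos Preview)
Your approach is exactly the paper's: parametrize the level set as the graph $x=\nabla_p\phi^*(q,w)$, pull back $\omega$ using the local expressions from Section~\ref{sec:torus}, observe the $dy$-terms drop, and identify the surviving $dw\wedge d\bar w$ coefficient with the second $w$-derivative of $\phi^*$ via implicit differentiation of $\nabla_x\phi(\nabla_p\phi^*(q,w),w)=q$. The only slip is the sign in your final displayed identity---the paper's computation gives $\phi^*_{w_i\bar w_j}=-\phi_{w_i\bar w_j}+\nabla_x\phi_{\bar w_j}\cdot(D^2_x\phi)^{-1}\nabla_x\phi_{w_i}$, i.e.\ the Schur complement you wrote down equals $-\phi^*_{w_i\bar w_j}$, which is precisely what produces the $-\tfrac{i}{2}$ in the statement.
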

\begin{proof}
Indeed, on the one hand
\[
 \phi^*_w(p,w) = -\phi_w(\phi^*_p(p,w),w).
\]
It holds since for any convex function $u$, $u(x) + u^*(p) = x\cdot p$ iff $p \in \partial u(x)$ iff $x \in \partial u^*(p)$. Taking another derivative we get
\[
 \phi^*_{w_i\bar{w}_j} = -\phi_{w_i\bar{w}_j} - \nabla_x\phi_{\bar{w}_j}\cdot \nabla_p\phi^*_{w_i} = -\phi_{w_i\bar{w}_j} + \nabla_x\phi_{\bar{w}_j}\cdot (D^2_x\phi)^{-1}\nabla_x\phi_{w_i}.
\]
On the other hand we can parametrize the level set by $\{x = \phi^*(q,w)\}$, with $q$ being constant, that allows us in turn to directly compute the form $i^*\omega$ using the local expressions for $\omega$ form Section~\ref{sec:torus}. Finally, after noticing that all summands involving $dy$ disappear we get the desired formula.
\end{proof}
It is a good place to note here that the Kirwan map mentioned above reduces in this coordinates to taking the Legendre transform. Indeed, suppose that $\omega$ has a local potential $f$. Then the local potential on $\Delta \times W$ will be the Legendre transform $f^*$ as follows from the above computations. Moreover, if $u$ is a global quasiplurisubharmonic function then there exists $\tilde{u}$ such that locally
\[
 f + u \longrightarrow (f+u)^* = f^* + \tilde{u}
\]
will be the potential for the form $\sigma_u$ reduced to $W$ from $\omega + i\partial\bar{\partial}u$ at some fixed regular value.

Let us now try to derive the formula for $\det\phi^*_{w\bar{w}}$, which at any fixed $q$ will be the local density of the symplectic volume of the quotient orbifold. 

\begin{Thm}
In the setting as above, the following relationship between volume densities holds
\[
 \det D^2\phi(x,w) = (-1)^m\det D^2_w\phi^*(\nabla_x\phi(x,w),w) \det D_x^2\phi(x,w),
\]
with $D_x$ and $D_w$ understood as restricting the derivation to toric and transveral variables respectively.
\end{Thm}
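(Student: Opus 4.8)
The plan is to read the claimed identity as a single Schur-complement computation, all of whose geometric content has in fact already been produced in the differentiation of the Legendre duality carried out in the proof of the Proposition above. I would work in the fixed $(x,y,w)$ chart, with $\phi$ the (necessarily $y$-invariant) local \Kahler potential and $\omega+i\partial\bar\partial\phi>0$. By $D^2\phi$ I understand the $n\times n$ Hessian of $\phi$ in the mixed real/complex variables $(x,w)$: its $x$--$x$ block is the real Hessian $\Phi_{xx}=(\phi_{x_ix_j})$, its $w$--$w$ block is the complex Hessian $\Phi_{w\bar w}=(\phi_{w_i\bar w_j})$, and its off-diagonal blocks are $\Phi_{x\bar w}=(\partial_{x_i}\partial_{\bar w_j}\phi)$ and $\Phi_{wx}=(\partial_{w_i}\partial_{x_j}\phi)$, the conjugate transpose of $\Phi_{x\bar w}$. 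The first thing I would check is that, because $\phi$ is invariant in $y$, this matrix is a fixed universal constant times the density of $\mu=(\omega+i\partial\bar\partial\phi)^n$ in the chart (this uses the local expression for $\omega$ from Section~\ref{sec:torus}, where the $z$--$z$ part of the complex Hessian collapses onto $\tfrac14\Phi_{xx}$); this is the point of the identity, since it is what makes the formula for $\eta_w(x)$ of the previous theorem drop out once the disintegration is in place. It also records that $\Phi_{xx}=D^2_x\phi$ is positive definite, hence invertible, so a Schur decomposition along the $x$-block is legitimate.

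First I would invoke the block-determinant identity $\det\begin{pmatrix}A&B\\ C&D\end{pmatrix}=\det A\cdot\det\!\bigl(D-CA^{-1}B\bigr)$ with $A=\Phi_{xx}$, $B=\Phi_{x\bar w}$, $C=\Phi_{wx}$, $D=\Phi_{w\bar w}$, which gives $\det D^2\phi=\det D^2_x\phi\cdot\det\!\bigl(\Phi_{w\bar w}-\Phi_{wx}\Phi_{xx}^{-1}\Phi_{x\bar w}\bigr)$. The second factor is exactly (minus) the object computed in the proof of the Proposition: differentiating $\phi^*_w(p,w)=-\phi_w(\nabla_p\phi^*(p,w),w)$ once more in $\bar w$ and using $\nabla_p\phi^*_{w_i}=-(D^2_x\phi)^{-1}\nabla_x\phi_{w_i}$ yields $\phi^*_{w_i\bar w_j}=-\phi_{w_i\bar w_j}+\nabla_x\phi_{\bar w_j}\cdot(D^2_x\phi)^{-1}\nabla_x\phi_{w_i}$, all evaluated at $p=\nabla_x\phi(x,w)$; that is, $D^2_w\phi^*=-\bigl(\Phi_{w\bar w}-\Phi_{wx}\Phi_{xx}^{-1}\Phi_{x\bar w}\bigr)$. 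Substituting, $\det D^2\phi=\det D^2_x\phi\cdot\det(-D^2_w\phi^*)=(-1)^m\det D^2_x\phi\cdot\det D^2_w\phi^*$, which is the assertion.

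The remainder is bookkeeping that I would spell out: the Wirtinger-calculus form of the mixed derivatives $\partial_{x_i}\partial_{\bar w_j}\phi$; the envelope-theorem identity $\phi^*_w=-\phi_w\circ(\nabla_p\phi^*,\mathrm{id})$ together with the implicit-function relations already displayed; and a check that nothing beyond $C^2$-regularity of $\phi$ and $\omega+i\partial\bar\partial\phi>0$ is used — the positivity also being what guarantees that $\nabla_x\phi(\cdot,w)$ is a diffeomorphism onto $\Delta^0$ with inverse $\nabla_p\phi^*(\cdot,w)$, which is where the duality relations live.

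The hard part is not any single computation but getting the real/complex interplay clean: one must be certain that the Hessian whose determinant is the \Monge density of $\mu$ really carries the \emph{complex} Hessian $(\phi_{w_i\bar w_j})$ in its $w$-block, and that the universal numerical constant relating $\det D^2\phi$, the density of $\mu$, and $\det(\phi_{\alpha\bar\beta})$ is transported consistently through all three theorem statements so that it cancels in the final formula for $\eta_w(x)$. Once the $w$-block is correctly taken to be the complex Hessian, the Schur complement collapses onto $-D^2_w\phi^*$ with no leftover factor, and the sign $(-1)^m$ is simply $\det(-\mathrm{Id})$ on the $m$-dimensional $w$-block.
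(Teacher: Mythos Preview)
Your argument is correct and substantially simpler than the paper's. The paper does not invoke the Schur complement; instead it expands $\det(\phi^*_{w_i\bar w_j})$ via the Leibniz formula, substitutes $\phi^*_{w_i\bar w_j}=-\phi_{w_i\bar w_j}+\nabla_x\phi_{\bar w_j}\cdot H^{-1}\nabla_x\phi_{w_i}$ into each factor, writes $H^{-1}$ through cofactors, sorts the resulting terms by the number $l$ of factors carrying an $H^{-1}$, and then matches them one by one against the Laplace expansion of $\det D^2\phi$. Closing that matching for $l>1$ requires a separate combinatorial identity on products of minors (Proposition~\ref{Prop:iloczyny-wyznacznikow}), whose inductive proof occupies the paper's final section. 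Your route replaces all of this with a single block factorisation and the already-established identity $D^2_w\phi^*=-\bigl(\Phi_{w\bar w}-\Phi_{wx}\Phi_{xx}^{-1}\Phi_{x\bar w}\bigr)$ from the preceding Proposition. The paper's approach buys nothing for the theorem itself; its only by-product is the minor identity of the last section, which is of independent interest but not needed here. Your closing caveat about tracking the numerical constant between $\det D^2\phi$ and the \Monge density is fair, but the paper is no more careful on this point than you are.
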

\begin{proof}
First let us notice that $\det D_x^2\phi(x,w)$ will be nonzero by the discussion at the beginning of this section and hence will always be invertible. Let us now prove the formula. Since we don't see any clever way to arrive at it we will just calculate. We also would like to recollect a well known fact that the inverse of an invertible matrix $A$ can be represented by formula $A^{-1} = C^T/\det(A)$ where $C$ is the cofactor matrix of $A$, namely a matrix whose $(i,j)$-th entry is the determinant of $A$ with $i$-th row and $j$-th column removed multiplied by $(-1)^{i+j}$. Since we are dealing with symmetric matrices, we can forget the transpose in the formula.

Let us denote the Hessian matrix of $x$ variables as $H$, by $D$ its determinant and by $D^{\alpha}_{\beta}$ the minor of $H$ with row $\alpha$ and column $\beta$ removed. Similarly we define $D^{\alpha_1,\ldots,\alpha_k}_{\beta_1,\ldots,\beta_k}$. The minor of the matrix with all rows and columns removed is defined to be 1. To make the notation a little more readable we will denote real derivatives by Greek letters and complex ones by Latin letters.

Let us look the determinant
\[
 \det(\sigma) = \sum_{t\in S} sgn(t)\prod_{i}\phi^{*}_{i,\bar{t(i)}} = \sum_{t\in S} sgn(t)\prod_{i}(-\phi_{i,t(i)} +\nabla_x\phi_i\cdot H^{-1}\cdot \nabla_x\phi_{\bar{t(i)}}) 
\]
Now we expand each product and look at the number of terms with $H^{-1}$ in each factor. Let us denote this number by $l$. If $l = 0$ then we get a summand
\[
 (-1)^m \det (\phi_{w\bar{w}}).
\]
If $l=1$ we get the following summands
\[
 \frac{(-1)^m}{D}\text{sgn}(t)\sum_{i, \alpha,\beta} (-1)^{\alpha+\beta-1}\phi_{i,\alpha} D^{\alpha}_\beta \phi_{\beta,\bar{t(i)}}\prod_{j\neq i}\phi_{j,\overline{t(j)}}
\]
and save for division by $D$ each of those appears in $(-1)^m\det(D^2\phi)$ with the exact sign $\text{sgn}(t)(-1)^{\alpha+\beta-1}$ since it's one addtional transposition from $\phi_{i,\bar{t(i)}}\phi_{\beta,\alpha}$.

Now suppose that $l >1$ and denote by $L$ a multi-index of length $l$ and by $L^c$ its completion of length $m-l$. For each pair of those we get the following summands
\[
 (-1)^{m}\prod_{k\in L}\left(\sum_{\alpha,\beta} (-1)^{\alpha+\beta-l}\phi_{k,\alpha} D^{\alpha}_\beta \phi_{\beta,\bar{t(k)}}\right)\prod_{i \in L^c}\phi_{i,\overline{t(i)}}
\]
for some fixed permutation $t$. Comparing those between permutations we notice that products in which $\alpha$ or $\beta$ appear at least twice cancel each other. Indeed, if determinants $D^{\alpha}_{\cdot}$ appear next to e.g. $\phi_{j,x_\alpha}$ and $\phi_{k,x_\alpha}$, then they will appear with opposite sign in summands with $t(j)$ and $t(k)$ reversed. For that reason, if $m > k$, then the products for $l > k$ will cancel out since some $\alpha$ and $\beta$ will have repeat itself in each product.

Armed with that observation we fix some $w$ indices $K$ and $\overline{w}$ indices $\overline{K}$ with fixed order and factor out the product $\prod_{i \in K}\phi_{i,\overline{t(i)}}$ ($t$ being the bijection from $j$-th element of $K$ to $j$-th element of $\overline{K}$). The remaining derivatives of $\phi$ will range over all the permutations of $\overline{K}^c$. Thus in front of $\prod_{i \in K}\phi_{i,\overline{t(i)}}$ there will be a following multiplier
\begin{align*}
 (-1)^m\sum_{u \in S(\overline{K}^c)}\text{sgn}(t+u) &\sum_{\alpha_l,\beta_l} (-1)^{|\alpha| + |\beta| -l^2}\sum_{r\in S(\alpha_L)}\text{sgn}(r) \quad  \\
  \quad & \left( \prod_{i\in K^c}\phi_{i,r(\alpha_i)}\phi_{\beta_i\overline{u(i)}}\right)\left(D^{-l}\sum_{s \in S_k}\text{sgn}(s) \prod_{i=1}^l D^{\alpha_i}_{\beta_{s(i)}}\right).
\end{align*}
Some clarification of the notation is due. By $\text{sgn}(t+s)$ we understand the sign of permutation that comes from concatenating $t$ and $u$, that is the permutation that applies $t$ to indices from $K$ and $u$ to indices from $K^c$. The symbols $\alpha_l$ and $\beta_l$ denote multiindices of length $l$, with $|\alpha| = \alpha_1 + \ldots + \alpha_l$.

Finally, the Proposition~\ref{Prop:iloczyny-wyznacznikow} proven at the end of the paper implies that this multiplier is in fact equal to
\[
 \frac{(-1)^m}{D} \hspace{-4pt} \sum_{u \in S(\overline{K}^c)}\text{sgn}(t+u)\sum_{\alpha_l,\beta_l} (-1)^{|\alpha| + |\beta| -l^2} \hspace{-5pt} \sum_{r\in S(\alpha_L)} \hspace{-3pt} \text{sgn}(r) \hspace{-3pt} \left( \prod_{i\in K^c} \hspace{-1pt} \phi_{i,r(\alpha_i)}\phi_{\beta_i\overline{u(i)}}\right) \hspace{-1pt} \left( D^{\alpha_1,\ldots,\alpha_k}_{\beta_1,\ldots,\beta_k}\right) \hspace{-1pt} .
\]
Each of those terms will appear exactly once in $(-1)^m \det (\phi_{w\bar{w}})/D$. Indeed, applying the Laplace expansion to $D$ from the highest index in $\alpha_l$ downward we will eventually arrive at the term $(-1)^{|\alpha| + |\beta|}\prod\phi_{\beta_i,\alpha_i}D^{\alpha_1,\ldots,\alpha_k}_{\beta_1,\ldots,\beta_k}$. It will be multiplied by the term $\text{sgn}(s)\prod \phi_{i,\overline{s(i)}}$ with $s = t + u$, understood as above. Now we switch $i$ with $\beta_i$ for each $i \in K^c$ and that will result with additional $(-1)^l = (-1)^{l^2}$ factor, and finally permute $\alpha_l$ to get additional multiplier $\text{sgn}(r)$.

Finally, we couclude that
\[
 \det(\sigma(p,w)) = (-1)^m\frac{\det D^2\phi(\nabla_p\phi^*(p,w),w)}{\det D^2_x \phi(\nabla_p\phi^*(p,w),w)}
\]
and the claim follows from the fact that $p = \nabla_x\phi(x,w)$ for some $x$.
\end{proof}

Armed with the above formula we can prove the first theorem. Let us take a measure $\mu$ and a solution $u$ to $g$-\Monge problem. Since the complement of $X^1$ is a pluripolar set, we can safely restrict our considerations to $X^1$. It is a set of full measure that is also a principal fibration over a compact orbifold (which among other things is second countable and locally compact), so the Rokhlin's theorem clearly applies in this setting. Let measures $\eta_w(x)\, dx$ be the conditional measures on orbits with the average $\hat{\mu}$. The conditionals are unique $\hat{\mu}$-a.e. Moreover, since the base of the fiber bundle $W$ is in a sense a space of fibers, the average measure can be naturally understood as a measure on $W$.

The formula for the determinant of the reduced form allows us to define the conditional measures immediately as
\[
 \eta_w(x) = 
  (-1)^m\frac{g(\moment_u) \sigma^m(\moment_u,w) }{\hat{\mu}(w)}\det D_x^2 u(x,w).
\]
With $\hat{\mu}(w)$ being simply the local density of $\hat{\mu}$ and serving as a normalizing factor.

To prove the conditional statement about the uniqueness of solutions to the transport \Monge equation let us first mention a result that will be of use in the proof

\begin{Thm}[\cite{Faulk}]
 Let $(W,\sigma)$ be a compact \Kahler orbifold, then for any smooth $f$ the equation
 \[
  (\sigma + i\partial\bar{\partial}\phi)^n = e^f\sigma^n
 \]
 always has a unique solution, provided both measures have the same total mass.
\end{Thm}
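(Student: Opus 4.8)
The plan is to prove this by the continuity method, i.e.\ Yau's proof of the Calabi conjecture transplanted to the orbifold category via local uniformizing charts; write $n = \dim_\zespolone W$ and $\sigma_\phi := \sigma + i\partial\bar{\partial}\phi$. First I would dispose of the normalization: integrating the equation forces $\int_W e^f\sigma^n = \int_W \sigma^n$, which is exactly the ``same total mass'' hypothesis, and I may rescale $f$ by an additive constant so that this holds on the nose. I would then dispatch uniqueness first, since it is the soft part. If $\phi_1,\phi_2$ both solve the equation, then $\sigma_{\phi_1}^n = \sigma_{\phi_2}^n$, so
\[
 0 = \sigma_{\phi_1}^n - \sigma_{\phi_2}^n = i\partial\bar{\partial}(\phi_1-\phi_2)\wedge\Theta, \qquad \Theta := \sum_{j=0}^{n-1}\sigma_{\phi_1}^{\,j}\wedge\sigma_{\phi_2}^{\,n-1-j},
\]
and $\Theta$ is a strictly positive $(n-1,n-1)$-form. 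Pulled back to any uniformizing chart this says $\phi_1-\phi_2$ solves a linear elliptic equation with no zeroth order term, so the maximum principle (valid on the smooth cover, hence downstairs after averaging over the finite chart group) gives $\phi_1-\phi_2\equiv\mathrm{const}$.

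For existence I would run the continuity path
\[
 (\sigma + i\partial\bar{\partial}\phi_t)^n = e^{tf + c_t}\sigma^n, \qquad t\in[0,1],
\]
where $c_t$ is the unique constant matching total masses, so that $\phi_0\equiv 0$ solves the $t=0$ equation and the solution set $S\subseteq[0,1]$ is nonempty. Openness of $S$ is routine: the linearization at a solution $\phi_t$ is $\psi\mapsto \Delta_{\sigma_{\phi_t}}\psi$ (the Laplacian of the Kähler metric $\sigma_{\phi_t}$), a self-adjoint elliptic operator on the compact Kähler orbifold whose kernel is the constants, hence an isomorphism between the mean-zero parts of the relevant Hölder spaces; one only has to note that Schauder theory and the implicit function theorem apply in an orbifold atlas, patching chartwise estimates with a subordinate partition of unity. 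Closedness of $S$ is the substance and needs the a priori estimates, uniform in $t$: a $C^0$ bound on $\phi_t$ (Moser iteration on $\sigma_{\phi_t}^n = e^{tf+c_t}\sigma^n$, or a pluripotential estimate of Ko{\l}odziej type, both of which localize to uniformizing charts), then Yau's second-order estimate bounding the trace $n + \Delta_\sigma\phi_t$ via the Aubin--Yau inequality, then a $C^{2,\alpha}$ estimate from the concavity of $A\mapsto(\det A)^{1/n}$ (Evans--Krylov), and finally bootstrapping to $C^\infty$ by Schauder. Once a $t$-independent $C^{2,\alpha}$ bound is in hand, $S$ is closed, so $S=[0,1]$ and $t=1$ furnishes the solution; together with the uniqueness above this proves the theorem.

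The main obstacle is the orbifold bookkeeping inside these a priori estimates. The Moser iteration and the Evans--Krylov estimate are genuinely interior PDE arguments, so one must run them on the smooth covering discs of an orbifold atlas and then check that the bounds descend --- that the invariant functions one actually works with inherit them, and that patching with a partition of unity subordinate to an orbifold atlas does not destroy uniformity near the orbifold loci. There are only finitely many chart groups and transition maps, so this is a matter of care rather than of new ideas; indeed this is precisely what is carried out in~\cite{Faulk}, and in practice one simply cites it.
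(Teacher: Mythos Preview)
Your sketch is correct and is precisely the approach of the cited reference~\cite{Faulk}: Yau's continuity method transported to the orbifold category via uniformizing charts, with uniqueness from the maximum principle and existence from openness (linearization is the Laplacian) plus closedness (the $C^0$, Aubin--Yau $C^2$, and Evans--Krylov $C^{2,\alpha}$ estimates, all localized to smooth covers). Note, however, that the present paper does not prove this theorem at all---it is quoted as a black box from~\cite{Faulk} and used only as an input to the uniqueness argument---so there is no in-paper proof to compare against; your proposal simply reproduces the content of the cited source.
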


Since we have a formula for the conditional measures, given a family of reduced forms parametrized by $\Delta$, instead of asking for a solution to $g$-\Monge problem we can ask for a family of solutions, one for each orbit that satisfies the formula proved above. That is the content of the Proposition below. 

\begin{Prop}
 Suppose $u$ and $v$ are two solutions to the $g$-\Monge problem. Then then following statements are equivalent:
 \begin{enumerate} \vspace{-0.5em}
  \item The conditional measures have $\hat{\mu}$-a.e. unique solutions orbit-wise, i.e. on each orbit form the set of $\hat{\mu}$-full measure of $W$ there is only one (up to additive constant) convex function $F$ that satisfies
  \[
   \eta_w(x) = (-1)^m\frac{g(\nabla_x F) \sigma_u^m(\nabla_x F,w) }{\hat{\mu}(w)}\det D_x^2 F(x).
  \]

  \item $\sigma_u = \sigma_v$, $\hat{\mu}$-a.e..
 \end{enumerate}
 Moreover, any of the above statement implies global uniqueness of solutions.
\end{Prop}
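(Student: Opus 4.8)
\emph{Strategy.} The plan is to argue fiber by fiber over the reduced orbifold $W$ and then glue. First I would record that $X^1$ has full $\mu$-measure, with pluripolar complement, and carries the structure of a principal $\torus_\zespolone^k$-fibration $\pi$ over the compact \Kahler orbifold $W$; hence by Rokhlin's theorem the disintegration $\{\eta_w\}$ of $\mu$ along the fibers of $\pi$ and its average $\hat\mu$ on $W$ are determined by $\mu$ and the fibration alone, so they coincide whether we extract them using $u$ or using $v$. Then, in local $(x,y,w)$-coordinates, I would feed $u$ and $v$ separately into the $g$-version of the first theorem: after the change of variables $x\mapsto p=\nabla_x u(x,w)$ (respectively $x\mapsto p=\nabla_x v(x,w)$) this says precisely that $\nabla_x u(\cdot,w)$ pushes the conditional $\eta_w(x)\,dx$ forward to the probability measure $\nu^u_w:=(-1)^m\frac{g(p)\,\sigma_u^m(p,w)}{\hat\mu(w)}\,dp$ on $\Delta$, and $\nabla_x v(\cdot,w)$ pushes it forward to the analogous $\nu^v_w$.

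\emph{Fiberwise uniqueness and (1)$\Leftrightarrow$(2).} Fix $w$ in the $\hat\mu$-full set where the conditionals are genuine. Since $\eta_w(x)\,dx$ is absolutely continuous it vanishes on Borel subsets of Hausdorff dimension at most $k-1$, so McCann's theorem~\cite{McCann} applies on the fiber: the transport map that is a (sub)gradient of a convex function and pushes $\eta_w(x)\,dx$ to a prescribed target is unique $\eta_w$-a.e. This is the fiberwise statement (1): any convex $F$ solving the equation in (1) has $\nabla_x F$ transporting $\eta_w(x)\,dx$ onto $\nu^u_w$, so $F$ is pinned down up to a constant, and since $u(\cdot,w)$ is such a solution, (1) amounts to saying that $v(\cdot,w)$ — which solves the analogous equation built from $\sigma_v$ — is forced to agree with it modulo a constant. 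Transporting these statements through the gradient maps: $u(\cdot,w)\equiv v(\cdot,w)$ modulo constants $\Leftrightarrow\nabla_x u(\cdot,w)=\nabla_x v(\cdot,w)\ \eta_w$-a.e.\ $\Leftrightarrow\nu^u_w=\nu^v_w$, and, as $g>0$ and $\hat\mu(w)>0$, $\Leftrightarrow\sigma_u^m(\cdot,w)=\sigma_v^m(\cdot,w)$. So (1) holds iff $\sigma_u^m(\cdot,w)=\sigma_v^m(\cdot,w)$ for $\hat\mu$-a.e.\ $w$. To identify this with (2) I would upgrade equality of the Duistermaat--Heckman top forms to equality of the reduced forms: for a regular value $q$, both $\sigma_u(q)$ and $\sigma_v(q)$ are \Kahler forms on $W$ in the same cohomology class — by the Kirwan map / equivariant cohomology discussion of Section~\ref{sec:torus} the reduced class depends only on $[\omega]$ and $q$ — so writing $\sigma_v(q)=\sigma_u(q)+i\partial\bar\partial\rho$ and using $\sigma_u(q)^m=\sigma_v(q)^m$ (which holds $\hat\mu$-a.e.\ on $W$, hence everywhere by continuity and positivity of $\hat\mu$), the orbifold \Monge uniqueness theorem of~\cite{Faulk} forces $\rho$ to be constant, i.e.\ $\sigma_u(q)=\sigma_v(q)$ for every regular $q$. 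That is (2), and the converse implication is immediate from the chain of equivalences above.

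\emph{Globalization.} Suppose now (2) holds. By the previous paragraph $\nabla_x u(\cdot,w)=\nabla_x v(\cdot,w)$ for $\hat\mu$-a.e.\ $w$, i.e.\ $\moment_u=\moment_v$ holds $\mu$-a.e.\ on $X^1$, hence everywhere on $X^0$ by continuity and positivity of the density there. Consequently $u-v$ has vanishing $x$-gradient on $X^0$, and being invariant in $y$ it descends to a function $c$ on $W^0$ with $u-v=\pi^*c$. Reducing the identity $\omega+i\partial\bar\partial u=(\omega+i\partial\bar\partial v)+i\partial\bar\partial(\pi^*c)$ at a regular value and using that the reduction of the $\torus_\zespolone^k$-invariant form $i\partial\bar\partial(\pi^*c)$ is $i\partial\bar\partial c$ on $W$, we get $\sigma_u=\sigma_v+i\partial\bar\partial c$; with (2) this forces $i\partial\bar\partial c=0$ on $W^0$. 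Hence $c$ is pluriharmonic on $W^0$; being bounded (by the regularity of $u$ and $v$ on $X^1$) it extends across the analytic set $W\setminus W^0$, so it is pluriharmonic on the compact connected \Kahler orbifold $W$ and therefore constant. Thus $u=v+\mathrm{const}$, and since (1)$\Leftrightarrow$(2) either statement yields this global uniqueness.

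\emph{Main obstacle.} I expect the globalization to be the delicate step: promoting the $\mu$-a.e.\ identity $\moment_u=\moment_v$ to an honest equality of momentum maps (this is exactly where positivity of the density on $X^0$ is used), and then checking that the fiber-constant $c$ is a genuine bounded function on $W^0$ that extends across the orbifold singularities, so that the maximum principle can be invoked. A secondary, softer point is the passage from $\sigma_u^m=\sigma_v^m$ to $\sigma_u=\sigma_v$, which needs a \Monge uniqueness statement on the (possibly singular) reduced orbifold — available through~\cite{Faulk} but to be applied with some care about the regularity of the reduced forms.
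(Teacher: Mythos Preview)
Your argument is correct and tracks the paper's proof closely: the fiberwise equivalence $(1)\Leftrightarrow(2)$ is obtained in both cases by reading the conditional-measure formula as an optimal-transport statement and invoking McCann's uniqueness, and the upgrade from $\sigma_u^m=\sigma_v^m$ to $\sigma_u=\sigma_v$ is in both cases the orbifold Calabi--Yau uniqueness of~\cite{Faulk}, applied after noting (via the Kirwan map) that the reduced forms lie in the same cohomology class.

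The only genuine difference is in the globalization. The paper works on the Legendre side: from Faulk's theorem it gets $u^*-v^*=f(p)$ for a.e.\ $p$, uses compactness of $W\times\Delta$ to make this hold everywhere, and then combines it with the fiberwise statement $u-v=c(w)$ (coming from $(2)\Rightarrow(1)$) via the Legendre relation $(u+c(w))^*=u^*-c(w)$ to force $f(p)=-c(w)=\text{const}$. You instead stay on the $X$ side: once $\moment_u=\moment_v$, the level sets coincide, so reducing $\omega_u-\omega_v=i\partial\bar\partial(\pi^*c)$ at a common regular value gives $\sigma_u-\sigma_v=i\partial\bar\partial c$, which vanishes by $(2)$; then $c$ is bounded pluriharmonic on the compact orbifold $W$ and hence constant. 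Your route is a bit more direct and avoids the detour through $W\times\Delta$, at the (mild) cost of the removable-singularity step across $W\setminus W^0$ that you correctly flag.
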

\begin{proof}
 Suppose the second assertion holds. Then at $\hat\mu$-almost every orbit the potential restricted to this orbit solves the equation
 \[
  \frac{1}{\hat{\mu}(w)}g(\nabla u) \sigma^m(\nabla u, w)\det D^2_x u = \eta_w(x),
 \]
 with $w$ serving as an orbit parameter. By McCann's result the solution to this problem is unique up to an additive constant, thus the difference between $u$ and $v$ can only depend on $w$. 
 
 Suppose the first assertion holds. Thus the moment map restricted to the orbit serves as an optimal transport map between $\eta_w$ and $g\sigma^m_u$ (disregarding the normalization). Since those maps are unique, that implies that the target measures must be equal, which in turn implies that $\sigma^m_u = \sigma^m_v, \hat{\mu}$-a.e..
 
 Suppose there is a set $A \subseteq W$, such that $\hat{\mu}[A] = 0$ and on this set $\sigma^m_u(p,A) \neq \sigma^m_v(p,A)$ for some $p$. If the $p$'s for which this holds form a set of non-zero measure then taking the average over $\Delta$ one gets that $\hat{\mu}[A] \neq 0$, thus for a.e. $p$ we must have that $\sigma^m_u = \sigma^m_v$ a.e.. For those $p$'s we can solve the \Monge equation on the orbifold $W$. Indeed, recall that going from the manifold to the reduced space induces a well defined map in cohomology, thus the potentials corresponding to $u$ and $v$ will be the potentials in the same cohomology class. Let us denote them by $u^*$ and $v^*$. By the solution of \Monge equation on orbifolds with right-hand $\sigma^m_v$ we get that $\sigma_u = \sigma_v$ and $u^* - v^* = c$. Moreover, $W \times \Delta$ being compact implies that $u^* - v^*$ differ only by a function of $p$, say $f(p)$. Together with the first statement it implies $f(p) = C$.
\end{proof}

The main result follows easily as corollary from the above theorem

\begin{Cor}
 If two solutions provide the same momentum map they must differ by a constant.
\end{Cor}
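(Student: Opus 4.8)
The plan is to deduce the Corollary from the preceding Proposition by checking that its hypothesis~(2) holds automatically for any two solutions with the same momentum map. So let $u$ and $v$ solve~\eqref{eq:monge} for $\mu=f\omega^n$ and suppose $\moment_u=\moment_v$. First I would read the hypothesis off locally: in a trivialisation in the $(x,y,w)$ coordinates the two momentum maps are $\nabla_x$ of the respective convex potentials, potentials whose difference is the globally defined, $\torus^k$-invariant function $u-v$; hence $\moment_u=\moment_v$ forces $\nabla_x u=\nabla_x v$ (everywhere on $X^0$ if the potentials are $C^2$, and $\hat\mu$-a.e.\ along each orbit in general), so $D_x^2 u=D_x^2 v$ there and $u-v$ descends to a globally defined function $h$ on the base orbifold $W$.

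Next I would play the two instances of the formula for the conditional measures against one another. As $u$ and $v$ produce the same $\mu$, Rokhlin's theorem gives them the same conditionals $\eta_w$ and the same average $\hat\mu$; in its transport version, valid in the present generality by the ``mutatis mutandis'' remark, the formula reads
\[
 \eta_w(x)=(-1)^m\frac{g(\moment_u)\,\sigma_u^m(\moment_u,w)}{\hat\mu(w)}\det D_x^2 u(x,w),
\]
and likewise with $v$. By the way this formula is obtained, it says that on the orbit over $w$ the monotone map $\moment_u=\nabla_x u$ pushes $\eta_w\,dx$ forward to the measure of density $(-1)^m g(q)\sigma_u^m(q,w)/\hat\mu(w)$ in $q\in\Delta^0$ (change of variables $q=\moment_u(x,w)$, $dq=\det D_x^2 u\,dx$, using that $\moment_u$ restricts to a diffeomorphism of the orbit onto $\Delta^0$ by Fact~\ref{fact} and $\moment^Y$ being strictly increasing along the flow of $JY^\#$); and, since $\moment_u=\moment_v$, the very same map pushes $\eta_w\,dx$ forward to the measure of density $(-1)^m g(q)\sigma_v^m(q,w)/\hat\mu(w)$ as well. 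A push-forward being determined by the measure and the map, $g\sigma_u^m=g\sigma_v^m$, hence $\sigma_u^m(q,w)=\sigma_v^m(q,w)$ for a.e.\ $(q,w)\in\Delta^0\times W$. For a.e.\ fixed regular value $q$ the forms $\sigma_u(q),\sigma_v(q)$ are \Kahler forms on the compact \Kahler orbifold $W$ in one de Rham class (reduction descends to cohomology) with equal top powers, hence equal by the \Monge uniqueness on orbifolds quoted above; after the usual shuffling between null sets in $q$ and in $w$ (as in the proof of the Proposition) this is precisely condition~(2), and the Proposition then gives $u-v=\text{const}$. (One could finish directly instead: $\sigma_u-\sigma_v$ equals $\tfrac{i}{2}\sum h_{w_i\bar w_j}\,dw_i\wedge d\bar w_j$, since the reduced potentials of the two forms differ by $-h$; combined with $\sigma_u^m=\sigma_v^m$ and the orbifold uniqueness this forces $i\partial\bar\partial h=0$ on the compact connected $W$, so $h$ is constant.)

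The one genuinely delicate point is the measure-theoretic bookkeeping in the non-smooth regime allowed by the standing hypotheses — $\omega+i\partial\bar\partial(\cdot)$ is only assumed positive on $X^0$, and $f$ may blow up on the small orbits. There one works with the $x$-subgradients $\partial_x u,\partial_x v$ \`a la McCann and with the Alexandrov Hessian in place of $D_x^2$, and one must arrange that the several full-$\hat\mu$-measure sets on which the identities above hold are compatible, so that the a.e.\ equality $\moment_u=\moment_v$ propagates first to the a.e.\ equality of the subgradients and then to $\sigma_u^m=\sigma_v^m$; it should also be noted that ``same momentum map'' is a trivialisation-independent condition, as $\moment_u,\moment_v$ are genuine global maps $X\to\Delta$. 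Granting these routine checks, the argument is as above, with the last step being nothing more than the invocation of the preceding Proposition.
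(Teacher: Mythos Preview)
Your argument is correct and follows the paper's line: the same momentum map means the same push-forward of each conditional $\eta_w$, hence $g\sigma_u^m=g\sigma_v^m$ and so $\sigma_u^m=\sigma_v^m$, after which the preceding Proposition finishes the job. The only cosmetic difference is that the paper appeals to condition~(1) of the Proposition (McCann's uniqueness on each orbit once the target $g\sigma^m$ is fixed), whereas you take the extra step through the orbifold \Monge uniqueness to verify condition~(2); since the Proposition already proves (1)$\Leftrightarrow$(2), this is the same argument in slightly more words, and your parenthetical direct finish via $i\partial\bar\partial h=0$ on $W$ is a pleasant bonus the paper does not spell out.
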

\begin{proof}
 The proof is trivial - the measure is determined by the conditionals and the average, thus both functions must satisfy the first point of the above proposition with the same $\sigma^m$, since the optimal transport maps are unique.
\end{proof}

\section{Proof of formula~\ref{Prop:iloczyny-wyznacznikow}} 

The only thing left to prove is the following

\begin{Prop}\label{Prop:iloczyny-wyznacznikow} Suppose we are given a $n \times n$ matrix $M$. Let us denote the determinant of the matrix by $D$, and by $D^\alpha_\beta$ the determinant of the matrix with $\alpha$-th row and $\beta$-th column removed. Then for  every $k \in \{2,\ldots,n\}$ and for every multi-index $(\alpha_1,\ldots,\alpha_k)$ with $\alpha_1 < \ldots < \alpha_k$ and $(\beta_1,\ldots,\beta_k)$ with $\beta_1 < \ldots < \beta_k$ the following equality holds
\[
 \sum_{s \in S_k}\text{sgn}(s) \prod_{i=1}^k D^{\alpha_{s(i)}}_{\beta_i} = D^{k-1}D^{\alpha_1,\ldots,\alpha_k}_{\beta_1,\ldots,\beta_k}
\]

\end{Prop}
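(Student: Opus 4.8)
The plan is to interpret every quantity in the identity through Jacobi's theorem on minors of the adjugate. Recall the classical Jacobi identity: if $C$ is the cofactor matrix of $M$ (so $C_{\alpha\beta} = (-1)^{\alpha+\beta}D^\alpha_\beta$), then for any row set $I=\{\alpha_1<\dots<\alpha_k\}$ and column set $J=\{\beta_1<\dots<\beta_k\}$ one has
\[
 \det\big(C_{\alpha_i,\beta_j}\big)_{i,j=1}^{k} = D^{k-1}\,(-1)^{\sigma(I)+\sigma(J)}\,D^{I}_{J},
\]
where $\sigma(I)=\alpha_1+\dots+\alpha_k$ and $D^I_J$ is the minor of $M$ with rows $I$ and columns $J$ deleted (this is exactly $D^{\alpha_1,\ldots,\alpha_k}_{\beta_1,\ldots,\beta_k}$ in the paper's notation). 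So the right-hand side $D^{k-1}D^{\alpha_1,\ldots,\alpha_k}_{\beta_1,\ldots,\beta_k}$ equals $(-1)^{\sigma(I)+\sigma(J)}\det(C_{\alpha_i,\beta_j})$. It remains to show the left-hand side equals the same thing.

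First I would expand $\det(C_{\alpha_i,\beta_j})_{i,j}=\sum_{s\in S_k}\operatorname{sgn}(s)\prod_{i=1}^k C_{\alpha_{s(i)},\beta_i}$ and substitute $C_{\alpha_{s(i)},\beta_i}=(-1)^{\alpha_{s(i)}+\beta_i}D^{\alpha_{s(i)}}_{\beta_i}$. The product of signs over $i$ is $(-1)^{\sum_i \alpha_{s(i)} + \sum_i \beta_i} = (-1)^{\sigma(I)+\sigma(J)}$, which is independent of $s$ because $s$ only permutes the $\alpha_i$'s among themselves. Pulling this constant out of the sum gives
\[
 \det\big(C_{\alpha_i,\beta_j}\big) = (-1)^{\sigma(I)+\sigma(J)}\sum_{s\in S_k}\operatorname{sgn}(s)\prod_{i=1}^k D^{\alpha_{s(i)}}_{\beta_i}.
\]
Combining this with the Jacobi identity, the factors $(-1)^{\sigma(I)+\sigma(J)}$ cancel and one is left precisely with $\sum_{s\in S_k}\operatorname{sgn}(s)\prod_{i=1}^k D^{\alpha_{s(i)}}_{\beta_i} = D^{k-1}D^{\alpha_1,\ldots,\alpha_k}_{\beta_1,\ldots,\beta_k}$, which is the claim. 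The cases $k=n$ (where the empty minor is $1$ by the paper's convention, and $\det(C)=D^{n-1}$) and $k<n$ are both covered uniformly.

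The main obstacle is that I would rather not invoke Jacobi's minor identity as a black box, so I would want to include a short self-contained derivation of it — or at least of the special form needed here. The cleanest route: consider the matrix identity $C^{T}M = D\cdot \mathrm{Id}$ (adjugate relation), restrict attention to the $k\times k$ submatrix of $C^T$ on rows $I$ and the $k\times k$ submatrix of $M$ on columns $J$, and use the Cauchy–Binet formula together with a rank/degeneracy argument (or specialize first to invertible $M$, where $C^T = D\,M^{-1}$, so the $I,J$ minor of $C^T$ is $D^k$ times the corresponding minor of $M^{-1}$, and then apply Jacobi's formula for minors of the inverse — itself provable by Cauchy–Binet or by Schur complements — and finally remove the invertibility hypothesis by Zariski density since both sides are polynomials in the entries of $M$). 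I expect the bookkeeping of signs in the passage between "minor of $M^{-1}$" and "complementary minor of $M$" to be the only genuinely fiddly point; everything else is formal. If a fully elementary proof is preferred over citing Jacobi, I would instead prove the identity directly by induction on $k$, using Laplace expansion of one of the $D^{\alpha_{s(i)}}_{\beta_i}$ along a fixed row to reduce to the $(k-1)$ case, mirroring the repeated-index cancellation argument already used in the proof of the preceding theorem; but this is longer and the Jacobi route is cleaner.
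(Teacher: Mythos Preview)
Your argument is correct: the proposition is exactly Jacobi's minor identity for the cofactor matrix, once one observes that the global sign $(-1)^{\sigma(I)+\sigma(J)}$ is independent of the permutation $s$ and therefore factors out of both sides. The sign bookkeeping you wrote down is right, and the extension from invertible $M$ to all $M$ by polynomial identity (Zariski density) is standard and sound.

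This is a genuinely different route from the paper's. The paper gives a fully self-contained proof by double induction on $n$ and $k$: it expands the left-hand side along the first factor, applies the inductive hypothesis for $k-1$, then (via a case split on whether a second nonvanishing minor $D^{\alpha_1}_{\beta_j}$ exists) rewrites both sides using repeated Laplace expansions and the $k=2$ case, finally establishing $k=2$ separately by a bare-hands expansion. Your approach, by contrast, identifies the statement as an instance of a named classical identity and reduces the whole proposition to a two-line sign computation plus a citation (or a short Cauchy--Binet/Schur-complement derivation of Jacobi). What the paper's approach buys is complete self-containment with no external input beyond Laplace expansion; what it costs is several pages of delicate sign tracking. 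What your approach buys is brevity and conceptual clarity; the cost is either trusting Jacobi as a black box or inserting one of the short proofs you sketch. Given that the paper's own remark already attributes the formula to Muir and points to the literature on determinantal identities, your route is arguably the more natural one here, and the inductive argument you mention as a fallback is essentially what the paper carries out in full.
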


\begin{Rem}
 The formula goes back probably to Muir. In~\cite{DeterminantalIdentities} one can see the proof of sort of `dual` identity, where the rows are added instead of removed. As before, we define the determinant of the matrix with all rows and columns removed as 1.
\end{Rem}

\begin{proof}
The proof goes by induction on $n$ and $k$. For $n=2$ and $k=2$ the formula is simply the definition of $2 \times 2$ determinant. Generally, for $n = k$ the formula is just the definition of determinant of the cofactor matrix of $M$, which is indeed $D^{k-1}$.

Before we start the proof there is one more notational remark: we will often use summation over some symmetric group of incomplete set of indices. For example, suppose the lower set of indices misses some $j$ and upper misses $i$. By permutation of such set we understand renumbering both sets and then permuting. With that out of the way we are ready to prove.

Assume now that the formula holds for some $k-1$ and for $k$ up to $n-1$, we would like to prove the formula for $n$ and $k$.

First, let us deal with the left-hand side. We may assume that for $\alpha_1$ there is at least one $\beta_i$ such that $D^{\alpha_1}_{\beta_i} \neq 0$. Indeed, if not the proof becomes trivial since then the left hand-side vanishes and elementary proof shows that the right-hand side must vanish too. Thus we may assume that there is some $i$ such that $D^{\alpha_1}_{\beta_i} \neq 0$. Since it does not effect the proof's mechanics and makes sign computation little less tiresome we will assume that $i=1$. Then we have that
\begin{align*}
 \sum_{s \in S_k}\text{sgn}(s)& \prod_{i=1}^k D^{\alpha_{s(i)}}_{\beta_i} = \sum_i (-1)^{i-1}D^{\alpha_{i}}_{\beta_1}\left(\sum_{t \in S_{k-1}}\text{sgn}(t)\prod_{j > 1}D^{\alpha_{t(j)}}_{\beta_j}\right) \\
 & =\sum_i (-1)^{i-1}D^{\alpha_{i}}_{\beta_1}\left(D^{k-2} D^{\alpha \setminus \alpha_i}_{\beta_{2,\ldots,k}}\right),
\end{align*}
where the first equality comes from factoring out the number of inversions involving $i$ and the second comes from the induction hypothesis on $k-1$. Now assume that there is another $j$ such that $D^{\alpha_1}_{\beta_j} \neq 0$. If there is not, then again the proof simplifies since we are left with the following claim
\[
 D^{\alpha_1}_{\beta_1}\left(D^{k-2}D^{\alpha_2,\ldots,\alpha_k}_{\beta_2,\ldots,\beta_k}\right) = D^{k-1}D^{\alpha_1,\ldots,\alpha_k}_{\beta_1,\ldots,\beta_k}
\]
and by the assumption on the nonvanishing of $D^{\alpha_1}_{\beta_1}$ and induction assumptions
\begin{align*}
 D^{k-1}D^{\alpha_1,\ldots,\alpha_k}_{\beta_1,\ldots,\beta_k} &= D^{k-1}\frac{\sum_{t\in S_{k-2}}\text{sgn}(t)\prod_j D^{\alpha_1,\alpha_{t(j)}}_{\beta_1,\beta_j}}{(D^{\alpha_1}_{\beta_1})^{k-3}} \\
 &= D\frac{\sum_{t\in S_{k-2}}\text{sgn}(t)\prod_j DD^{\alpha_1,\alpha_{t(j)}}_{\beta_1,\beta_j}}{(D^{\alpha_1}_{\beta_1})^{k-3}} \\
 &= D\frac{\sum_{t\in S_{k-2}}\text{sgn}(t)\prod_j D^{\alpha_1}_{\beta_1}D^{\alpha_{t(j)}}_{\beta_j} - D^{\alpha_1}_{\beta_j}D^{\alpha_{t(j)}}_{\beta_1}}{(D^{\alpha_1}_{\beta_1})^{k-3}} \\
 &= D D^{\alpha_1}_{\beta_1}\sum_{t\in S_{k-2}}\text{sgn}(t)\prod_jD^{\alpha_{t(j)}}_{\beta_j} = D^{\alpha_1}_{\beta_1}\left(D^{k-2}D^{\alpha_2,\ldots,\alpha_k}_{\beta_2,\ldots,\beta_k}\right).
\end{align*}
Thus we might suppose that there is an index $j > 1$ such that $D^{\alpha_1}_{\beta_j} \neq 0$. For the same reason as before we will take $j = k$. Finally, the left hand-side equals
\[
 D^{\alpha_1}_{\beta_1}\left(D^{k-2}D^{\alpha_2,\ldots,\alpha_k}_{\beta_2,\ldots,\beta_k}\right) + \sum_{i>1} (-1)^{i-1}D^{\alpha_{i}}_{\beta_1}\left(D^{k-2}\frac{\sum_{t\in S_{k-2}}\text{sgn}(t)\prod_j D^{\alpha_1,\alpha_{t(j)}}_{\beta_k,\beta_j}}{(D^{\alpha_1}_{\beta_k})^{k-3}}\right).
\]

With the right-hand side we use the same tricks
\[
 D^{k-1}D^{\alpha_1,\ldots,\alpha_k}_{\beta_1,\ldots,\beta_k} 
 = 
 D^{k-1}\frac{\sum_{t\in S_{k-1}}\text{sgn}(t)\prod_j D^{\alpha_1,\alpha_{t(j)}}_{\beta_k,\beta_j}}{(D^{\alpha_1}_{\beta_k})^{k-2}}.
\]
In each product above there is an element with second pair $\phantom{.}^{\alpha_{t(1)}}_{\beta_1}$, we factor it out together with $(-1)^{t(i)}$, which is the number of inversions for this element, since $\alpha_1$ is not counted. As before we multiply this element by one of $D$'s and use hypothesis for $n=2$ to get
\[
 D^{k-2}\sum_i(-1)^i\left(D^{\alpha_1}_{\beta_1}D^{\alpha_i}_{\beta_k} - D^{\alpha_i}_{\beta_1}D^{\alpha_1}_{\beta_k}\right)\frac{\sum_{t\in S_{k-2}}\text{sgn}(t)\prod_j D^{\alpha_1,\alpha_{t(j)}}_{\beta_k,\beta_j}}{(D^{\alpha_1}_{\beta_k})^{k-2}}.
\]
We notice that the terms with $D^{\alpha_i}_{\beta_1}D^{\alpha_1}_{\beta_k}$ are exactly those from the left-hand side sum except the first one, thus we are left with
\begin{align*}
 &D^{k-2}\sum_i(-1)^i D^{\alpha_1}_{\beta_1}D^{\alpha_i}_{\beta_k}\frac{\sum_{t\in S_{k-2}}\text{sgn}(t)\prod_j D^{\alpha_1,\alpha_{t(j)}}_{\beta_k,\beta_j}}{(D^{\alpha_1}_{\beta_k})^{k-2}} \\
 =\; &D^{\alpha_1}_{\beta_1}\left( \sum_i(-1)^i D^{\alpha_i}_{\beta_k}\frac{\sum_{t\in S_{k-2}}\text{sgn}(t)\prod_j D D^{\alpha_1,\alpha_{t(j)}}_{\beta_k,\beta_j}}{(D^{\alpha_1}_{\beta_k})^{k-2}}\right) \\
 =\; &D^{\alpha_1}_{\beta_1}\left( \sum_i(-1)^i D^{\alpha_i}_{\beta_k}\frac{\sum_{t\in S_{k-2}}\text{sgn}(t)\prod_j\left( D^{\alpha_1}_{\beta_j}D^{\alpha_{t(j)}}_{\beta_k} - D^{\alpha_1}_{\beta_k}D^{\alpha_{t(j)}}_{\beta_j}\right)}{(D^{\alpha_1}_{\beta_k})^{k-2}}\right).
\end{align*}
After multiplying out the differences the final observation is the following: for each permutation the only term that will not be canceled is the one involving the term $(D^{\alpha_1}_{\beta_k})^{k-2}$. Indeed, suppose we have a product with $\alpha_i$ removed from possible permutations and a factor $D^{\alpha_1}_{\beta_j}D^{\alpha_{t(j)}}_{\beta_k}$ in this product, then there will be the same product with $t(j)$ and $i$ reversed. We can suppose without losing any generality that $t(j) > i$. The signs of those two permutations will differ by $t(j)-i -1$ which is exactly the number of inversions that will either appear or disappear by changing $i$ to $t(j)$, thus both products will cancel and we are left with
\[
 D^{\alpha_1}_{\beta_1}\left( \sum_i(-1)^{i+k-2} D^{\alpha_i}_{\beta_k}\sum_{t\in S_{k-2}}\text{sgn}(t)\prod_j D^{\alpha_{t(j)}}_{\beta_j}\right).
\]
But then obviously $(-1)^{k+i-2} = (-1)^{(k-1)-(i-1)}$ and by induction hypothesis on $k$ we end up with
\[
 D^{\alpha_1}_{\beta_1}\left(D^{k-2}D^{\alpha_2,\ldots,\alpha_k}_{\beta_2,\ldots,\beta_k}\right).
\]

To finish the proof we must prove the formula for $k=2$. Assume then that it holds for some $n-1$. Let $\alpha < \gamma$ and $\beta < \delta$,  then
\begin{align*}
 D^2 D^{\alpha,\gamma}_{\beta,\delta} =& \sum_i (-1)^{\alpha+i}m_{\alpha,i}\,D^{\alpha}_i D^{\alpha,\gamma}_{\beta,\delta} = (-1)^{\alpha+\beta}m_{\alpha,\beta}\,D^{\alpha}_\beta D^{\alpha,\gamma}_{\beta,\delta}\\
 + &\sum_{i<\beta} (-1)^{\alpha+i}m_{\alpha,i}\left(\sum_{j<\alpha}(-1)^{\beta+j+1}m_{j,\beta}D^{\alpha,j}_{\beta,i} + \sum_{j>\alpha}(-1)^{\beta+j}m_{j,\beta}D^{\alpha,j}_{\beta,i}\right)D^{\alpha,\gamma}_{\beta,\delta} \\
 + &\sum_{i>\beta} (-1)^{\alpha+i}m_{\alpha,i}\left(\sum_{j<\alpha}(-1)^{\beta+j}m_{j,\beta}D^{\alpha,j}_{\beta,i} + \sum_{j>\alpha}(-1)^{\beta+j+1}m_{j,\beta}D^{\alpha,j}_{\beta,i}\right)D^{\alpha,\gamma}_{\beta,\delta}
\end{align*}
just by using the Laplace expansion twice. Now by the induction hypothesis we have
\[
 D^{\alpha,j}_{\beta,i}D^{\alpha,\gamma}_{\beta,\delta} = (-1)^{\text{sgn}(\gamma - j)\text{sgn}(\delta-i)}D^{\alpha}_{\beta}D^{\alpha,\gamma,j}_{\beta,\delta,i} + D^{\alpha,\gamma}_{\beta,i}D^{\alpha,j}_{\beta,\delta}
\]
and we plug that into the sum above.

On the other hand we have 
\begin{align*}
 D^{\alpha}_{\beta}D^{\gamma}_{\delta} &= D^{\alpha}_{\beta}\left(\sum_{i<\delta} (-1)^{\alpha+i}m_{\alpha,i}D^{\alpha,\gamma}_{\delta,i} +  \sum_{i > \delta} (-1)^{\alpha+i + 1}m_{\alpha,i}D^{\alpha,\gamma}_{\delta,i}\right) =\\
 &(-1)^{\alpha+\beta}m_{\alpha,\beta}\,D^{\alpha}_{\beta} D^{\alpha,\gamma}_{\beta,\delta} +
 \end{align*}
 \begin{align*}
 \vphantom{=} D^{\alpha}_{\beta}\left(\sum_{i<\beta} (-1)^{\alpha+i}m_{\alpha,i}\left(\sum_{j<\alpha}(-1)^{\beta+j+1}m_{j,\beta}D^{\alpha,\gamma,j}_{\delta,\beta,i} \right.\right. + &\sum_{\alpha < j < \gamma}(-1)^{\beta+j}m_{j,\beta}D^{\alpha,\gamma,j}_{\beta,\delta,i} \\
 &+ \left. \sum_{j > \gamma}(-1)^{\beta+j+1}m_{j,\beta}D^{\alpha,\gamma,j}_{\beta,\delta,i}\right) 
 \end{align*}
 \begin{align*}
 \vphantom{= D^{\alpha}_{\beta}\left(\right.}+ \sum_{\beta<i<\delta} (-1)^{\alpha+i}m_{\alpha,i}\left(\sum_{j<\alpha}(-1)^{\beta+j}m_{j,\beta}D^{\alpha,\gamma,j}_{\delta,\beta,i} \right. + &\sum_{\alpha < j < \gamma}(-1)^{\beta+j+1}m_{j,\beta}D^{\alpha,\gamma,j}_{\beta,\delta,i}  \\
 &+ \left.\sum_{j > \gamma}(-1)^{\beta+j}m_{j,\beta}D^{\alpha,\gamma,j}_{\beta,\delta,i}\right)
 \end{align*}
 \begin{align*}
 \vphantom{= D^{\alpha}_{\beta}\left(\right.}+ \sum_{i>\delta} (-1)^{\alpha+i+1}m_{\alpha,i}\left(\sum_{j<\alpha}(-1)^{\beta+j}m_{j,\beta}D^{\alpha,\gamma,j}_{\delta,\beta,i} \right. + &\sum_{\alpha < j < \gamma}(-1)^{\beta+j+1}m_{j,\beta}D^{\alpha,\gamma,j}_{\beta,\delta,i}\\
 &+ \left.\left.\sum_{j > \gamma}(-1)^{\beta+j}m_{j,\beta}D^{\alpha,\gamma,j}_{\beta,\delta,i}\right)\right)
\end{align*}
and for the other term
\begin{align*}
 -D^{\gamma}_{\beta}D^{\alpha}_{\delta} =& \\
 -&\left(\sum_{i<\beta} (-1)^{\alpha+i}m_{\alpha,i}D^{\alpha,\gamma}_{\beta,i} +  \sum_{i > \beta} (-1)^{\alpha+i + 1}m_{\alpha,i}D^{\alpha,\gamma}_{\beta,i}\right) \times\\
 &\left(\sum_{j<\alpha} (-1)^{\beta+j}m_{j,\beta}D^{\alpha,j}_{\delta,\beta} +  \sum_{j > \alpha} (-1)^{\beta+j+ 1}m_{\alpha,i}D^{\alpha,\gamma}_{\delta,i}\right).
\end{align*}
Now after staring at both sides long enough one should see that they are in fact equal.
\end{proof}

\subsection*{Acknowledgement}
The author would like to thank S\l{}awomir Dinew for his encouragement. The author was supported by Polish National Science Centre grant 2018/29/N/ST1/02817. 

\bibliographystyle{acm}
\bibliography{/afs/matinf.uj.edu.pl/usr/dokt/im/myga/k.../ooo/ooo/sanina/Bibliografia/bibliografia_toryczna}
\vspace{1em}
\textsc{Szymon Myga, Department of Mathematics and Computer Science, Jagiellonian University, Poland}

\textit{E-mail address: }\texttt{\email}
\end{document}